\newtheorem{theorem}{Theorem}
\newtheorem{corollary}{Corollary}
\newtheorem{proposition}{Proposition}
\newtheorem{lemma}{Lemma}
\newtheorem{remark}{Remark}
\newtheorem{conjecture}{Conjecture}
\newtheorem{example}{Example}
\newtheorem{problem}{Problem}
\newtheorem{definition}{Definition}
\def\sfp{\textup{sfp}}
\def\sqrt{\textup{sqrt}}
\def \T{\textup{T}}
\def \rank{\textup{rank}}
\def \nullity{\textup{nullity}}
\def \diag{\textup{diag~}}
\def \Span{\textup{Span~}}
\newcommand{\rmnum}[1]{\romannumeral #1}
\newcommand\restr[2]{{
		\left.\kern-\nulldelimiterspace 
		#1 
		\right|_{#2} 
}}
\newcommand{\Rmnum}[1]{\expandafter\@slowromancap\romannumeral #1@}
\title{An improved condition for a graph to be determined by its generalized spectrum}
\author{\small Wei Wang$^{{\rm a}}$\quad\quad Wei Wang$^{\rm b}$\thanks{Corresponding author: wang\_weiw@163.com}\quad\quad Fuhai Zhu$^{\rm c}$
\\
{\footnotesize$^{\rm a}$School of Mathematics, Physics and Finance, Anhui Polytechnic University, Wuhu 241000, P. R. China}\\
{\footnotesize$^{\rm b}$School of Mathematics and Statistics, Xi'an Jiaotong University, Xi'an 710049, P. R. China}\\
{\footnotesize$^{\rm c}$Departmant of Mathematics, Nanjing University, Nanjing 210093, P. R. China}
}
\date{}
\begin{document}
 \maketitle

\begin{abstract}
A fundamental and challenging problem in spectral graph theory is to characterize which graphs are uniquely determined by their spectra. In Wang [J. Combin. Theory, Ser. B, 122 (2017): 438-451], the author proved that an $n$-vertex graph $G$ is uniquely determined by its generalized spectrum (DGS) whenever $2^{-\lfloor\frac{n}{2}\rfloor}\det W$ is odd and square-free. Here, $W$ is the walk matrix of $G$, namely, $W=[e,Ae,\ldots,A^{n-1}e]$ with $e$ all-ones vector and $A$ the adjacency matrix of $G$.   In this paper, we focus on a larger family of graphs  with $d_n$ square-free, where $d_n$ refers to the last invariant factor of $W$. We introduce a new kind of polynomial for a graph $G$ associated with a prime $p$. Such a polynomial is invariant under generalized cospectrality. Using the newly defined polynomial, we obtain a sufficient condition for a graph in the larger family to be DGS. The main result of this paper  improves upon the  aforementioned  result of Wang while the proof for the main result gives a new way to attack the  problem of generalized spectral characterization of graphs. \\

\noindent\textbf{Keywords}: generalized spectrum; generalized spectral characterization; Smith normal form; square-free part

\noindent
\textbf{AMS Classification}: 05C50
\end{abstract}
\section{Introduction}
\label{intro}
Let $G$ be a simple graph with vertex set $\{1,2,\ldots,n\}$.  The \emph{adjacency matrix} of $G$ is the $n\times n$ symmetric matrix $A=(a_{i,j})$, where $a_{i,j}=1$ if $i$ and $j$ are adjacent;  $a_{i,j}=0$ otherwise. We often identify a graph $G$ with its adjacency matrix $A$. For example, the \emph{spectrum} of $G$, denoted by $\sigma(G)$, refers to the \emph{spectrum} of $A$, i.e., the roots (including multiplicities) of the characteristic polynomial
$\chi(A;x)=\det(x I-A)$ of $A$. Two graphs with the same spectrum are called \emph{cospectral}. Isomorphic graphs are clearly cospectral (as their adjacency matrices are  similar via a permutation matrix), but the converse is not true in general. A graph $G$ is \emph{determined by its spectrum} (DS for short) if any graph cospectral with $G$ is isomorphic to $G$. A fundamental and challenging problem in spectral graph theory is to  determine whether or not a given graph is DS. For basic results on spectral characterizations (determination)
of graphs, we refer the readers to the survey papers \cite{ervdamLAA2003,ervdamDM2009}.

The \emph{generalized spectrum} of a graph $G$ is the ordered pair $(\sigma(G),\sigma(\overline{G}))$, where $\overline{G}$ is the complement of $G$.  Naturally, two graphs are \emph{generalized cospectral} if they have the same generalized spectrum;  a graph $G$ is said to be \emph{determined by its generalized spectrum}
(DGS for short) if any graph generalized cospectral with $G$ is isomorphic to $G$. For a graph $G$, the \emph{walk matrix} of $G$ is
\begin{equation}
W=W(G):=[e,Ae,\ldots,A^{n-1}e],
\end{equation}
where $e$ is the all-ones vector. A graph $G$ is \emph{controllable} if $W(G)$ is nonsingular. We shall restrict ourselves to controllable graphs; the family of controllable graphs of order $n$ is denoted by $\mathcal{G}_n$.

The following simple arithmetic criterion for a controllable graph being DGS was proved in \cite{wang2013ElJC,wang2017JCTB}.
\begin{theorem}[\cite{wang2013ElJC,wang2017JCTB}]\label{sqf}
	Let $G\in \mathcal{G}_n$. If $2^{-\lfloor\frac{n}{2}\rfloor}\det W$ is odd and square-free, then $G$ is DGS.
\end{theorem}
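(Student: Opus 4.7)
The plan is to reformulate generalized cospectrality via rational orthogonal matrices: two controllable graphs $G$ and $G'$ are generalized cospectral if and only if there exists a rational orthogonal matrix $Q$ with $Qe = e$ and $Q^T A(G) Q = A(G')$, with isomorphism corresponding to $Q$ being a permutation matrix. Introducing the \emph{level} $\ell = \ell(Q)$, the smallest positive integer such that $N := \ell Q$ lies in $\mathbb{Z}^{n\times n}$, one observes that an integer orthogonal matrix satisfying $Qe=e$ is automatically a permutation matrix; hence the whole problem reduces to showing that the hypothesis forces $\ell = 1$ for every admissible $Q$, i.e., that no prime divides $\ell$.

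The three governing identities are $N^T N = \ell^2 I$, $Ne = \ell e$, and $N^T W(G) = \ell W(G')$ (the last from $Q^T W(G) = W(G')$). Suppose first that an odd prime $p$ divides $\ell$. Then $N^T W(G) \equiv 0 \pmod{p}$, so every column of $N \bmod p$ lies in the left null space of $W(G)$ over $\mathbb{F}_p$. Using $N^T N \equiv 0 \pmod{p^2}$ together with $Ne \equiv 0 \pmod p$, I would extract two $\mathbb{F}_p$-linearly independent null vectors of $W(G)^T \bmod p$; this forces $\operatorname{rank}_{\mathbb{F}_p} W(G) \le n-2$, whence $p^2 \mid \det W(G)$, contradicting square-freeness of $2^{-\lfloor n/2 \rfloor}\det W$. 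This rules out every odd prime divisor of $\ell$.

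The prime $p = 2$ is the genuine obstacle and explains the normalization $2^{\lfloor n/2\rfloor}$ in the hypothesis: for any controllable graph, $2^{\lfloor n/2 \rfloor}$ automatically divides $\det W$ (a parity/symmetry consequence of $A$ being symmetric with zero diagonal), so requiring $\det W$ itself to be square-free would be too restrictive; the correct strengthening is square-freeness after removing this unavoidable $2$-power. My plan is to show that $2 \mid \ell$ forces the $2$-adic valuation of $\det W$ to strictly exceed $\lfloor n/2 \rfloor$, contradicting the oddness of $2^{-\lfloor n/2 \rfloor}\det W$. To do this I would lift the mod-$2$ analysis to mod $4$: combining $N^T N \equiv 0 \pmod 4$ with $Ne \equiv 0 \pmod 2$, and choosing representative columns of $N/2$ carefully, I expect to produce a null vector of $W(G)^T$ modulo $2$ that is genuinely independent of the $\lfloor n/2 \rfloor$ null vectors always present, thereby boosting the $2$-adic valuation by at least one.

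Once $\ell = 1$ is established for every admissible $Q$, such $Q$ is orthogonal with integer entries and fixes $e$, hence is a permutation matrix, giving $G' \cong G$ and the conclusion. I expect the odd-prime step to be essentially a clean rank argument over $\mathbb{F}_p$, whereas the dyadic step, tracking how the level interacts with the obligatory $2^{\lfloor n/2 \rfloor}$ in $\det W$, is where the real technical difficulty lies.
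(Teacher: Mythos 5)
Your overall framework---reduce to showing $\ell(Q)=1$ for every regular rational orthogonal $Q$ carrying $A(G)$ to an adjacency matrix, then kill odd prime divisors and the prime $2$ separately---is exactly the one underlying the paper's treatment (Lemmas \ref{gcQ}, \ref{pdn} and \ref{onelevel}), and the identities $N^\T N=\ell^2I$, $Ne=\ell e$, $N^\T W(G)=\ell W(H)$ are the right starting point. The genuine gap is in your odd-prime step. From $W(G)^\T N\equiv 0\pmod p$ you do get that every column of $N$ lies in $\mathcal{N}(W(G)^\T)$ over $\mathbb{F}_p$, and minimality of $\ell$ guarantees at least \emph{one} nonzero such column; but nothing in $N^\T N\equiv 0\pmod{p^2}$ together with $Ne\equiv 0\pmod p$ produces a \emph{second} independent one. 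If $N\equiv uv^\T\pmod p$ has rank one, those relations only force $u^\T u\equiv v^\T v\equiv e^\T u\equiv e^\T v\equiv 0\pmod p$, which is entirely consistent over $\mathbb{F}_p$, where isotropic vectors abound. So rank counting alone yields only $\nullity_p W\ge 1$, i.e.\ $p\mid\det W$ --- which the square-free hypothesis permits, so there is no contradiction. The real content of the odd-prime case is to exclude the configuration ``$\nullity_p W=1$ with the spanning vector $\xi_0$ of $\mathcal{N}(W^\T)$ isotropic and orthogonal to $e$,'' and this requires spectral input: $\mathcal{N}(W^\T)$ is $A$-invariant (Lemma \ref{invs}), so $\xi_0$ is an eigenvector of $A$ over $\mathbb{F}_p$ whose eigenvalue is a common root of $\chi(A;x)$ and $\chi(A+J;x)$, and one must then relate this to the $p$-divisibility of $\det W$. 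In this paper that is done via Lemma \ref{sameroots}, Corollary \ref{np1} and Proposition \ref{almostmain}, where the contradiction comes from replacing the last column of $W$ by $\frac{1}{p}f(A)e$ and showing $\ell\mid p^{-1}\det W$; in \cite{wang2017JCTB} it is the ``involved argument'' alluded to before Lemma \ref{oddl}. Your proposal, as written, would prove something false in isolation (that $p\mid\ell$ forces $p^2\mid\det W$ by pure rank considerations), so the missing idea is not a detail but the heart of the theorem.

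The dyadic step is also only a declaration of intent. That $2\mid\ell$ forces $2^{\lfloor n/2\rfloor+1}\mid\det W$ is precisely Lemma \ref{oddl}, which is the single hardest ingredient of the original proof; ``lifting to mod $4$ and choosing representative columns of $N/2$ carefully'' does not yet explain where the extra factor of $2$ comes from, nor does it interact with the structural reason the $\lfloor n/2\rfloor$ obligatory factors of $2$ appear (Lemma \ref{tdw}, Lemma \ref{tmf}). You correctly identify this as the locus of difficulty, but no argument is supplied. In summary: same architecture as the paper, but both pillars --- the anisotropy argument for odd $p$ and the mod-$4$ analysis for $p=2$ --- are missing, and the first is sketched along lines that cannot succeed.
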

Recently, Theorem \ref{sqf} has been extended or partially extended in various ways. For example, Qiu et al.~\cite{qiu2019DM} proved a similar result for the signless Laplacian spectrum. Li and Sun~\cite{li2021DM} considered the problem for $A_\alpha$-spectrum and unified Theorem \ref{sqf}  and the result of Qiu et al. \cite{qiu2019DM}. We refer to \cite{qiu2019EJC, qiu2021LAA,wang2020EJC,wang2021EUJC} for more results on the generalizations of Theorem \ref{sqf}.

The main aim of this paper is to improve upon Theorem \ref{sqf}, that is, to give a weaker  condition to guarantee a graph to be DGS. In general, if  $\det W$ contains a multiple odd prime factor then $G$ may not be DGS. To obtain a more effective sufficient condition, we use the notions of Smith normal forms and invariant factors of integral matrices. We briefly recall these notions with an additional assumption that the involved integral matrices are square and invertible.

Two $n\times n$ integral matrices $M_1$ and $M_2$ are \emph{integrally equivalent} if $M_2$ can be
obtained from $M_1$ by a sequence of the following operations: row permutation, row negation, addition of an integer multiple of one row to another and the corresponding column operations. Any integral invertible  matrix $M$ is integrally equivalent to a diagonal matrix $\diag[d_1,d_2 \ldots,d_n]$, known as the \emph{Smith normal form} of $M$, in which $d_1,d_2\ldots,d_n$ are positive integers with $d_i\mid d_{i+1}$ for $i = 1,2,...,n-1$. The diagonal elements $d_1,d_2\ldots,d_n$ are the \emph{invariant factors} of $M$. We  note that for an integral square matrix $M$, the determinant can be easily recovered,  up to a sign, from the Smith normal form. Indeed, $\det M=\pm d_1d_2\cdots d_n$. But it is generally impossible to determine the Smith normal form of $M$ from its determinant.

The following proposition obtained in \cite{wang2017JCTB} is an exception, which gives an equivalent description of the condition in Theorem \ref{sqf}.
\begin{proposition}[\cite{wang2017JCTB}]\label{maxrank}
	If	$\det W=\pm 2^{\lfloor\frac{n}{2}\rfloor}b$ for some odd and square-free integer $b$, then the Smith normal form of $W$ is
	$$\diag[\underbrace{1,1,\ldots,1}_{\lceil\frac{n}{2}\rceil},\underbrace{2,2,\ldots,2,2b}_{\lfloor\frac{n}{2}\rfloor}].$$
\end{proposition}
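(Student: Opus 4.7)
The plan is to pin down the $2$-adic valuations and the odd parts of the invariant factors $d_1\mid d_2\mid\cdots\mid d_n$ of $W$ separately, using the two pieces of the factorization $\det W=\pm 2^{\lfloor n/2\rfloor}b$ together with a universal mod-$2$ rank bound for $W$ that holds for every graph.

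The odd parts are easy. The chain $d_1\mid\cdots\mid d_n$ is preserved when we take odd parts, and the product of the odd parts is $|b|$. If some odd prime $p$ divided $d_i^{\mathrm{odd}}$ for an index $i<n$, then $p$ would divide $d_j^{\mathrm{odd}}$ for every $j\ge i$, so $p^2\mid b$, contradicting square-freeness. Hence $d_i^{\mathrm{odd}}=1$ for $i<n$ and $d_n^{\mathrm{odd}}=|b|$.

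The bulk of the work is the $2$-part, where the core claim is that \emph{for every graph $G$ on $n$ vertices} one has $\rank_{\mathbb F_2}W(G)\le\lceil n/2\rceil$. To establish this I would first prove the walk identity $e^TA^ke\equiv 0\pmod 2$ for every $k\ge 1$. When $k=2j-1$ is odd, write $e^TA^{2j-1}e=v^TAv$ with $v=A^{j-1}e$; since $A$ is symmetric with zero diagonal, $v^TAv=2\sum_{i<\ell}v_iv_\ell A_{i\ell}$ is even. When $k=2j$ is even, using $x^2=x$ in $\mathbb F_2$,
\[
e^TA^{2j}e=\sum_i(A^je)_i^2\equiv\sum_i(A^je)_i=e^TA^je\pmod 2,
\]
and one iterates until the exponent is odd. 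Consequently every entry of $W^\top W$ vanishes modulo $2$ except possibly the $(1,1)$ entry, which equals $n\bmod 2$. In particular, the columns $w_2,\dots,w_n$ of $W$ span a totally isotropic subspace $U\subseteq\mathbb F_2^n$ for the standard inner product, so $\dim U\le\lfloor n/2\rfloor$. If $n$ is odd then $w_1=e$ cannot lie in $U$ (because $e^Te=1$ in $\mathbb F_2$), giving $\rank_{\mathbb F_2}W\le 1+\dim U\le\lceil n/2\rceil$; if $n$ is even then $w_1$ is itself isotropic and orthogonal to every $w_i$, so the whole column space is totally isotropic and has dimension at most $n/2=\lceil n/2\rceil$.

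With this bound in hand, the hypothesis $v_2(\det W)=\lfloor n/2\rfloor$ completely pins down the $2$-adic valuations: at most $\lceil n/2\rceil$ of the $d_i$ are odd, so at least $\lfloor n/2\rfloor$ are even, and since each even $d_i$ contributes $v_2\ge 1$ to the total $\sum_iv_2(d_i)=\lfloor n/2\rfloor$, there are exactly $\lfloor n/2\rfloor$ even invariant factors, each with $v_2=1$. Divisibility forces these to be $d_{\lceil n/2\rceil+1},\dots,d_n$, and combining with the odd-part analysis produces the stated Smith normal form. The main obstacle is the mod-$2$ rank bound; once the identity $s_k\equiv 0\pmod 2$ for $k\ge 1$ is isolated, the isotropy argument is short and the remainder is invariant-factor bookkeeping.
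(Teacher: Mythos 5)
Your proof is correct. The paper itself gives no proof of Proposition~\ref{maxrank} --- it is quoted from \cite{wang2017JCTB} --- but your argument is essentially the standard one used there: the identity $e^\T A^k e\equiv 0\pmod 2$ for $k\ge 1$ (odd $k$ via the zero diagonal of $A$, even $k$ via $x^2=x$ over $\mathbb{F}_2$ and iteration) yields the total isotropy of the relevant column span, hence $\rank_2 W\le\lceil n/2\rceil$ (the same fact underlying Lemma~\ref{tdw}), and the rest is the invariant-factor bookkeeping you describe, which is carried out correctly for both the $2$-part and the odd part.
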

Now we introduce a  polynomial for a graph $G$ associated with a prime $p$, which plays a key role in this paper. We use $\mathbb{F}_p$ to denote the finite field of order $p$, and use $J$ to  denote the all-ones matrix (of order $n$).
\begin{definition}\label{Phip}\normalfont
	Let $p$ be an odd prime and $G$ be a graph with adjacency matrix $A$. We define	
	\begin{equation}\Phi_p(G;x)=\gcd(\chi(A;x),\chi(A+J;x))\in \mathbb{F}_p[x],
	\end{equation}
	where  the greatest common divisor (gcd) is taken over $\mathbb{F}_p$.
\end{definition}
\begin{remark}\label{invgc}\normalfont
	Write $f(t,x)=\chi(A+tJ;x)$, $t\in \mathbb{Z}$. Note that $f(t,x)$   is linear in $t$. It is not difficult to see that $\Phi_p(G;x)$ is invariant under generalized cospectrality. That is, if $G$ and $H$ are generalized cospectral, then $\Phi_p(G;x)=\Phi_p(H;x)$.
\end{remark}

Let $p$ be an odd prime and  $f\in \mathbb{F}_p[x]$ be a monic polynomial over the field $\mathbb{F}_p$. Now let $f = \prod_{1\le i\le r}f_i^{e_i}$
be the irreducible factorization of $f$, with distinct monic irreducible polynomials $f_1,f_2,\ldots, f_r$ and positive integers $e_1,e_2,\ldots, e_r$. The \emph{square-free part} of $f$, denoted by $\sfp (f)$,  is $\prod_{1\le i\le r}f_i$; see \cite[p.~394]{gathen}.

For an integral matrix $M$ and a prime $p$, we use $\rank_p M$ and $\nullity_p M$ to denote the rank and the nullity of $M$ over $\mathbb{F}_p$, respectively. We shall prove that for any graph $G$ and prime $p$,
\begin{equation}\label{basicupperbound}
\deg \sfp(\Phi_p(G;x))\le \nullity_p W(G).
\end{equation}
The main result of this paper is the following theorem.
\begin{theorem}\label{main}
	Let $G\in \mathcal{G}_n$ and $d_n$ be the last invariant factor of $W=W(G)$. Suppose that $d_n$ is square-free. If for each odd prime factor $p$ of $d_n$,
	\begin{equation}\label{keyequ}
	\deg\sfp(\Phi_p(G;x))= \nullity_p W,
	\end{equation} then $G$ is DGS.
\end{theorem}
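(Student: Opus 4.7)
The plan is to proceed via the standard reformulation of generalized cospectrality in terms of regular rational orthogonal matrices. Two controllable graphs $G$ and $H$ are generalized cospectral if and only if there is a matrix $Q\in \mathbb{Q}^{n\times n}$ with $Q^TQ=I$, $Qe=e$ and $Q^TA(G)Q=A(H)$; moreover $G\cong H$ precisely when $Q$ can be chosen to be a permutation matrix. For such a $Q$, let $\ell(Q)$ denote its \emph{level}, the least positive integer with $\ell(Q)Q$ integral; since an integer-valued regular orthogonal matrix is a permutation, it suffices to prove $\ell(Q)=1$. The matrix $d_n W(G)^{-1}$ is integral (from the Smith form), which gives $\ell(Q)\mid d_n$ by a standard argument, so $\ell(Q)$ is square-free. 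Arguing by contradiction, assume $\ell:=\ell(Q)>1$ and pick a prime $p\mid \ell$.

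The case $p=2$ is handled by the mod-$2$ analysis already used in the proof of Theorem \ref{sqf}: square-freeness of $d_n$ forces the $2$-part of $d_n$ to be at most $2$, and the familiar obstruction to the existence of a nonzero $2Q\bmod 2$ (coming from the mod-$4$ structure of $WW^T$ and from the row-sum and parity constraints on $2Q$) reproduces the previous contradiction. The substantive new work lies in the odd-prime case.

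Now fix an odd prime $p\mid \ell$. Set $\bar R:=(pQ)\bmod p\in \mathbb{F}_p^{n\times n}$; the minimality of $\ell$ (together with $p$ dividing $\ell$ exactly once, by square-freeness) forces $\bar R\ne 0$. Reducing the identities $QQ^T=I$, $Qe=e$, $QJ=JQ=J$, $QA(G)=A(H)Q$, and $Q^TW(G)=W(H)$ modulo $p$ yields over $\mathbb{F}_p$ that $\bar R\bar R^T=0$, $\bar Re=0=\bar RJ=J\bar R$, $\bar RA(G)=A(H)\bar R$, $\bar R(A(G)+J)=(A(H)+J)\bar R$, and $\bar R^T W(G)=0$. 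Let $V$ be the row space of $\bar R$. Then $V$ is $A(G)$- and $(A(G)+J)$-invariant, totally isotropic, and lies in $e^\perp$ (because $e\in \ker \bar R$); hence $V\subseteq K:=\ker W(G)^T\bmod p$, which has dimension $\nullity_p W$. Since $V\subseteq e^\perp$, $J$ acts as zero on $V$, so $A(G)|_V=(A(G)+J)|_V$ and $\chi(A(G)|_V;x)$ divides both $\chi(A(G);x)$ and $\chi(A(G)+J;x)$ modulo $p$, hence divides $\Phi_p(G;x)$.

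The final and hardest step is to convert the equality $\deg\sfp(\Phi_p(G;x))=\nullity_p W$ into a contradiction with $\bar R\ne 0$. The plan is to combine (i) the rank bound $\rank \bar R\le \nullity_p W=\dim K$, (ii) a refinement of the proof of (\ref{basicupperbound}) which, for each common irreducible factor of $\chi(A(G))$ and $\chi(A(G)+J)$ modulo $p$, produces a corresponding $A(G)$-invariant summand inside $K$, and (iii) the hypothesised equality; together these force $\rank \bar R=\nullity_p W$, $V=K$, and $\chi(A(G)|_K;x)=\sfp(\Phi_p(G;x))$ to be square-free, so that $A(G)|_K$ is semisimple. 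The total-isotropy relation $\bar R\bar R^T=0$ on $V=K$ then contradicts the existence of a symmetric, semisimple action of $A(G)$ with square-free characteristic polynomial on a nonzero isotropic subspace, via an inner-product computation on a common eigenvector of $A(G)|_K$. Translating the polynomial equality into a structural rigidity incompatible with the isotropy of $V$ is the main obstacle and the principal novelty of the paper; the reductions leading up to it follow templates already present in the DGS literature.
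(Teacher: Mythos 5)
Your overall frame (reduce to $\ell(Q)=1$, use $\ell\mid d_n$, handle $p=2$ by the known mod-$2$ obstruction, and for odd $p\mid\ell$ work with the reduction of $\ell Q$ modulo $p$) is a legitimate starting point, and it is in fact the route of the \emph{older} proofs of Theorem \ref{sqf}; but it is not the paper's route, and your final step contains a genuine gap. First, two repairable slips: $\bar R$ should be $(\ell Q)\bmod p$, not $(pQ)\bmod p$ (the latter need not be integral when $\ell\ne p$), and the identity $\bar R^{\T}W(G)\equiv 0$ places the \emph{column} space of $\bar R$ inside $\mathcal{N}(W(G)^{\T})$; the row space lands in $\mathcal{N}(W(H)^{\T})$, and ``$V\subseteq e^\perp$ hence $V\subseteq K$'' is a non sequitur. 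The serious problem is the concluding contradiction. Nothing in your ingredients forces $\rank\bar R=\nullity_p W$ or $V=K$: the column space of $\bar R$ is merely \emph{some} nonzero $A$-invariant totally isotropic subspace of $K$. More fatally, even granting $V=K$ and that $\chi(\restr{A}{K};x)=\sfp(\Phi_p(G;x))$ is square-free, the asserted incompatibility with isotropy is false as pure linear algebra over $\overline{\mathbb{F}}_p$. A root $\lambda$ of $\sfp(\Phi_p(G;x))$ need not be a simple root of $\chi(A;x)$; its generalized eigenspace $V_\lambda=\mathcal{N}(A-\lambda I)^{v_\lambda}$ is anisotropic as a whole (Claim 2 in Lemma \ref{facchi}), but a one-dimensional subspace $K\cap V_\lambda$ of a higher-dimensional anisotropic space can perfectly well be spanned by an isotropic eigenvector (e.g.\ $(1,i)$ with $i^2=-1$ inside a two-dimensional eigenspace of a symmetric matrix). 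So the ``inner-product computation on a common eigenvector'' does not produce a contradiction, and no purely mod-$p$ argument can: some arithmetic input beyond reduction modulo $p$ is needed, exactly as in the $\nullity_p W=1$ case of Theorem \ref{sqf}, where one must lift to $\mathbb{Z}/p^2\mathbb{Z}$ to rule out $\xi^{\T}\xi\equiv 0$.

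The paper avoids this obstacle entirely. It shows (Lemmas \ref{mainchi}--\ref{basicinq}) that the hypothesis \eqref{keyequ} pins down the $p$-main polynomial as $m_p(G;x)=\chi(A;x)/\sfp(\Phi_p(G;x))$, hence makes $m_p$ a generalized-cospectral invariant (Corollary \ref{samepmain}); it then builds the compressed integral walk matrix $\overline{W}=[e,Ae,\ldots,A^{n-k-1}e,\frac1p f(A)e,\ldots,\frac1p A^{k-1}f(A)e]$ with $f\equiv m_p(G;x)\pmod p$, for which $Q^{\T}\overline{W}=\overline{W'}$ and $\det\overline{W}=p^{-k}\det W$ is coprime to $p$, giving $\ell\mid\det\overline{W}$ and thus $p\nmid\ell$ (Proposition \ref{almostmain}). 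That divisibility argument is the arithmetic ingredient your proposal is missing; to salvage your approach you would have to supply an analogue of it rather than rely on semisimplicity plus isotropy.
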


We shall show that \eqref{keyequ} always holds for the case that $\nullity_p W=1$; see Corollary \ref{np1} in Section \ref{pt4}. Using Proposition \ref{maxrank}, we easily see that any graph satisfying the condition of Theorem \ref{sqf} necessarily satisfies the condition of Theorem \ref{main}. The converse is not true of course; as seen from later examples. This means that Theorem \ref{main} does improve upon Theorem \ref{sqf}. Furthermore, the proof of Theorem \ref{main}  gives an alternative proof of Theorem \ref{sqf}.

The main strategy in proving Theorem \ref{main} uses some ideas from \cite{qiuarXiv}. In \cite{qiuarXiv}, Qiu et al. strengthen Theorem \ref{sqf} in a different way. The argument developed in  \cite{qiuarXiv} gives a new proof of Theorem \ref{sqf}.  Nevertheless, their argument essentially depends on the assumption that $ \nullity_p W=1$. To overcome this restriction, we generalize a familiar property for the characteristic polynomial of a symmetric matrix over $\mathbb{R}$ to the case of $\mathbb{F}_p$ or its extension. This is the main aim of Section 2. The proof of Theorem \ref{main} is given in Section 3. Some examples and discussions are given in the last section.

\section{Orthogonality over an extension field of $\mathbb{F}_p$}
Throughout this section, we assume that $p$ is a fixed odd prime. Let $\overline{\mathbb{F}}_p$ be the algebraic closure of the finite field  $\mathbb{F}_p$. Let  $\overline{\mathbb{F}}_p^n$  denote the linear space consisting of all $n$-dimensional column vectors over $\overline{\mathbb{F}}_p$. Two vectors $u,v\in \overline{\mathbb{F}}_p^n$ are called \emph{orthogonal} if $u^\T v=0$. The notation for this is $u \perp v$. Naturally, two subspaces $U$ and $V$ are called \emph{orthogonal} and denoted by $U\perp V$, if $\xi\perp \eta$  for any $\xi\in U$ and $\eta \in V$.
\begin{definition}[\cite{babai1992}]\normalfont
For  a subspace  $V$  of $\overline{\mathbb{F}}_p^n$,  the \emph{orthogonal space} of $V$ is
	\begin{equation}
	V^\perp=\{u\in \overline{\mathbb{F}}_p^n\colon\, v^\T u=0\text{~for every~$v\in V$} \}.
	\end{equation}
	\end{definition}
Of course, 	$V^\perp$ is a subspace of $\overline{\mathbb{F}}_p^n$ and $V^\perp$ has dimension $n-\dim V$. A major difficulty here is that $V^\perp\cap V$ may contain some nonzero vector and hence   $\overline{\mathbb{F}}_p^n=V\oplus V^\perp$ does \emph{not} hold in general. This explains why we do not call $V^\perp$  the  \emph{orthogonal complement} of $V$, a name usually used in Euclidian space $\mathbb{R}^n$. A subspace $V\subset \mathbb{F}^n$ is \emph{isotropic} if $V\cap V^\perp$ contains a nonzero vector. Otherwise it is \emph{anisotropic} \cite{babai1992}. Note that  $(\overline{\mathbb{F}}_p^n)^\perp$ contains only zero vector and hence $\overline{\mathbb{F}}_p^n$ is anisotropic by definition.
\begin{lemma} [{{\cite[p.270]{roman}}}]\label{equforani}Let $U$ and $V$ be two subspace of $\overline{\mathbb{F}}_p^n$ with $U\subset V$. Then
	\begin{equation}\label{inequdim}
	\dim (U^\perp \cap V)\ge \dim V -\dim U.
	\end{equation}
	Moreover, the equality in \eqref{inequdim} holds if $V$ is anisotropic.
\end{lemma}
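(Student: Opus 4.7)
The plan is to deduce both parts from three standard facts about the non-degenerate symmetric bilinear form $(u,v)\mapsto u^\T v$ on $\overline{\mathbb{F}}_p^n$: for every subspace $W$, $\dim W+\dim W^\perp=n$; for any two subspaces $X,Y$, $\dim(X+Y)+\dim(X\cap Y)=\dim X+\dim Y$ together with $(X+Y)^\perp=X^\perp\cap Y^\perp$; and the double-annihilator identity $(X^\perp)^\perp=X$. Each of these follows from non-degeneracy of the form on the ambient space, so no restriction of the form to a subspace is yet involved.

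For the inequality I would simply write
$$\dim(U^\perp\cap V)=\dim U^\perp+\dim V-\dim(U^\perp+V)\ge (n-\dim U)+\dim V-n=\dim V-\dim U,$$
using only the trivial bound $\dim(U^\perp+V)\le n$. This establishes \eqref{inequdim}, and the argument shows that equality is equivalent to $U^\perp+V=\overline{\mathbb{F}}_p^n$.

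For the equality case under the anisotropy hypothesis on $V$, I would translate this last condition into its orthogonal form: by the annihilator identities, $(U^\perp+V)^\perp=(U^\perp)^\perp\cap V^\perp=U\cap V^\perp$. Since $U\subseteq V$, this intersection lies inside $V\cap V^\perp$, which is $\{0\}$ by anisotropy. Hence $U^\perp+V$ has trivial orthogonal space, which forces $U^\perp+V=\overline{\mathbb{F}}_p^n$ and delivers the claimed equality.

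The only subtlety I expect is recognising where the anisotropy hypothesis actually enters the argument: the three ``annihilator arithmetic'' facts above depend on the dot product being non-degenerate on the \emph{full} space $\overline{\mathbb{F}}_p^n$, while the hypothesis on $V$ is used exactly once, to conclude $U\cap V^\perp=\{0\}$. Since no step is specific to characteristic $p$ or to the algebraically closed field $\overline{\mathbb{F}}_p$, the same argument works over any field equipped with a non-degenerate symmetric bilinear form on the ambient space.
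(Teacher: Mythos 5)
Your proof is correct and follows essentially the same route as the paper: the inequality comes from the identity $\dim(U^\perp\cap V)=\dim U^\perp+\dim V-\dim(U^\perp+V)$ together with the trivial bound $\dim(U^\perp+V)\le n$, and the equality case reduces to showing $U^\perp+V=\overline{\mathbb{F}}_p^n$. The only (cosmetic) difference is the last step: the paper gets $U^\perp+V=\overline{\mathbb{F}}_p^n$ from the inclusion $V^\perp+V\subseteq U^\perp+V$ (since $U\subseteq V$ gives $V^\perp\subseteq U^\perp$) and $\dim(V^\perp+V)=n$, whereas you compute $(U^\perp+V)^\perp=U\cap V^\perp\subseteq V\cap V^\perp=\{0\}$ and invoke the double-annihilator identity; both arguments use the anisotropy of $V$ at exactly the same point.
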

\begin{proof}
	Note that $\dim U^\perp=n-\dim U$. We have
	\begin{equation}\label{upv}
	\dim (U^\perp \cap V)=(n-\dim U) +\dim V-\dim (U^\perp +V).
	\end{equation}
Thus, \eqref{inequdim}	holds as $\dim (U^\perp +V)\le n$.
Now suppose that $V$ is anisotropic. By definition, we have $V^\perp\cap V=\{0\}$ and hence $\dim (V^\perp +V)=\dim V^\perp+\dim V=n$. Noting that $V^\perp+V\subset U^\T +V\subset \overline{\mathbb{F}}_p^n$ as $U\subset V$, we must have  $\dim (U^\T +V)=n$. By \eqref{upv}, the equality in \eqref{inequdim} holds.
\end{proof}
Let $A$ be an $n\times n$ matrix over $\overline{\mathbb{F}}_p$. We usually identify $A$ as a linear transformation (also denoted by $A$) on $\overline{\mathbb{F}}_p^n$ defined by $A\colon\,x\mapsto Ax$.  A subspace $U\subset \overline{\mathbb{F}}_p^n$ is \emph{$A$-invariant} if $AU\subset U$, that is, if  $Ax\in U$ for any $x\in U$. For an $A$-invariant subspace $U$, we use $\restr{A}{U}$ to denote the linear transformation $A$ restricted to $U$.
\begin{lemma}\label{facchi}
	If $A$ is a symmetric matrix over $\overline{\mathbb{F}}_p$ and $U$ is an $A$-invariant subspace of $\overline{\mathbb{F}}_p^n$. Then $U^\perp$ is $A$-invariant and
	\begin{equation}\label{ff}
	\chi(A;x)=\chi(\restr{A}{U};x)\chi(\restr{A}{U^\perp};x).
	\end{equation}
\end{lemma}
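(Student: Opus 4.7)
My plan is to split the claim into two parts: the $A$-invariance of $U^\perp$ and the factorization of $\chi(A;x)$. The first part follows at once from the symmetry of $A$: for $u\in U$ and $v\in U^\perp$, one has $u^\T(Av)=(Au)^\T v=0$ since $Au\in U$ by $A$-invariance of $U$ while $v\in U^\perp$; hence $Av\in U^\perp$.

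For the factorization, the obstacle emphasized in the paper is that $\overline{\mathbb{F}}_p^n$ need not decompose as $U\oplus U^\perp$, so the block-diagonalization argument available in the Euclidean setting is not available here. My approach is to work with the quotient space $\overline{\mathbb{F}}_p^n/U^\perp$ instead. Since $U^\perp$ is $A$-invariant, choosing any basis adapted to $U^\perp$ makes the matrix of $A$ block upper triangular, which yields
\begin{equation*}
\chi(A;x)=\chi(\restr{A}{U^\perp};x)\cdot\chi(\bar{A};x),
\end{equation*}
where $\bar{A}$ denotes the linear map induced by $A$ on $\overline{\mathbb{F}}_p^n/U^\perp$. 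It then suffices to show $\chi(\bar{A};x)=\chi(\restr{A}{U};x)$.

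To achieve this, I would introduce the bilinear pairing $B\colon U\times (\overline{\mathbb{F}}_p^n/U^\perp)\to \overline{\mathbb{F}}_p$ defined by $B(u,v+U^\perp)=u^\T v$. It is well defined because $u^\T w=0$ for every $w\in U^\perp$. Non-degeneracy on the right side is immediate from the definition of $U^\perp$, while non-degeneracy on the left side uses the non-degeneracy of the standard bilinear form on $\overline{\mathbb{F}}_p^n$. Since $\dim U=\dim(\overline{\mathbb{F}}_p^n/U^\perp)$, the pairing $B$ is perfect. The symmetry of $A$ then yields the adjointness relation $B(Au,v+U^\perp)=(Au)^\T v=u^\T(Av)=B(u,\bar{A}(v+U^\perp))$, so in any pair of dual bases for $U$ and $\overline{\mathbb{F}}_p^n/U^\perp$ the matrices representing $\restr{A}{U}$ and $\bar{A}$ are mutual transposes. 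Consequently they share the same characteristic polynomial, and \eqref{ff} follows by combining with the factorization above. The main subtlety I anticipate is pinning down precisely where the non-degeneracy of the ambient form enters the definition of $B$, since the isotropic setting tempts one into invoking direct sum decompositions that simply fail here.
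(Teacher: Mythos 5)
Your proof is correct, but it takes a genuinely different route from the paper's. The paper proves \eqref{ff} by splitting $\overline{\mathbb{F}}_p^n$ into the generalized eigenspaces $V_i=\mathcal{N}(A-\lambda_i I)^{v_i}$ via the primary decomposition theorem, showing that distinct $V_i$'s are mutually orthogonal and that each $V_i$ is anisotropic, and then comparing $\dim(U\cap V_i)$ with $\dim(U^\perp\cap V_i)$ through Lemma \ref{equforani}; the failure of $\overline{\mathbb{F}}_p^n=U\oplus U^\perp$ is circumvented there because isotropy can only occur ``across'' distinct eigenvalues, never inside a single $V_i$. Your argument instead passes to the quotient $\overline{\mathbb{F}}_p^n/U^\perp$, uses the block upper-triangular form coming from the $A$-invariance of $U^\perp$ to get $\chi(A;x)=\chi(\restr{A}{U^\perp};x)\chi(\bar{A};x)$, and identifies the induced map $\bar{A}$ with the transpose of $\restr{A}{U}$ via the pairing $B(u,v+U^\perp)=u^\T v$; all the steps check out (well-definedness since $u^\T w=0$ for $u\in U$, $w\in U^\perp$; non-degeneracy on both sides; the dimension count $\dim U=n-\dim U^\perp=\dim(\overline{\mathbb{F}}_p^n/U^\perp)$ making the pairing perfect; and the adjointness relation from $A^\T=A$, which in dual bases makes the two matrices mutual transposes). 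Your route is more elementary and more general: it avoids the primary decomposition entirely, needs only that the standard bilinear form is non-degenerate and that $A$ is self-adjoint for it, and in particular does not use that the ground field is algebraically closed, whereas the paper's proof relies on $\chi(A;x)$ splitting into linear factors. What the paper's longer argument buys is the explicit eigenspace picture (its Claims 1--3), which makes transparent where isotropy can and cannot occur; but for establishing \eqref{ff} alone your pairing argument is a clean and complete substitute.
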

\begin{proof}
	The first assertion is simple as one can check that the usual argument for the same assertion in the  field $\mathbb{R}$ is also valid for $\overline{\mathbb{F}}_p$. Nevertheless, we need some extra work to establish (\ref{ff}) as the equality $\overline{\mathbb{F}}_p^n=U\oplus U^\perp$ may fail.
	
	Let $\chi(A;x)=(x-\lambda_1)^{v_1}\cdots(x-\lambda_k)^{v_k}$, where $\lambda_1,\ldots,\lambda_k$  are distinct roots of $\chi(A;x)$. Let $V_i=\mathcal{N}(A-\lambda_i I)^{v_i}$ be the nullspace of $(A-\lambda_i I)^{v_i}$. Then by the primary decomposition theorem (see e.g.~\cite{hoffman1971}), we have
	
	\noindent(\rmnum{1}) each $V_i$ is $A$-invariant;
	
	\noindent(\rmnum{2}) $\dim V_i=v_i$ and $\chi(\restr{A}{V_i};x)=(x-\lambda_i)^{v_i}$;
	
	\noindent(\rmnum{3}) $\overline{\mathbb{F}}_p^n=V_1\oplus\cdots\oplus V_k$;
	
	\noindent(\rmnum{4}) there are polynomials $h_1,\ldots,h_k$ such that each $h_i(A)$ is
	the identity on $V_i$ and is zero on all the other $V_i$'s.
	
	Noting that $U$ is $A$-invariant, we have
		\begin{equation}\label{Uop}
	U=(U\cap V_1)\oplus\cdots\oplus (U\cap V_k),
	\end{equation}
	see \cite[p.~264]{hoffman1971}. Similarly, as $U^\perp$ is also $A$-invariant, we have
		\begin{equation}\label{Uperp}
	U^\perp =(U^\perp \cap V_1)\oplus\cdots\oplus (U^\perp \cap V_k).
	\end{equation}

\noindent\emph{Claim} 1: $V_i\perp V_j$ for all distinct $i$ and $j$.

 Let $\xi$ and $\eta$ be any vectors in $V_i$ and $V_j$ respectively. As $h_i(A)$ is the identity on $V_i$ and is zero on $V_j$ , we have $h_i(A)\xi=\xi$ and  $h_i(A)\eta=0$. Noting that $A^\T =A$, we have
 \begin{equation}
 \xi^\T \eta=(h_i(A)\xi)^\T \eta=\xi^\T (h_i(A))^\T \eta=\xi^\T (h_i(A) \eta)=0.
 \end{equation}
This proves Claim 1.

\noindent\emph{Claim} 2: Each $V_i$ is anisotropic.

Let $V_i'=\oplus_{j\neq i}V_j$. By (\rmnum{3}), we see that  $\dim V_i'=n-\dim V_i$. On the other hand, by Claim 1, we know that $V_i\perp V_j$ for $j\neq i$ and hence  $V_i\perp V_i'$, i.e., $V_i'\subset V_i^\perp$.  Noting that   $\dim V_i^\perp=n-\dim V_i$, the two spaces $V_i'$ and $V_i^\perp$ must coincide. Therefore,  $V_i\cap V_i^\perp=V_i\cap V_i'=\{0\}$ and Claim 2 follows.	

\noindent\emph{Claim} 3:  $U^\perp \cap V_i=(U\cap V_i)^\perp \cap V_i$ for each $i$.

Let  $U_i=U\cap V_i$ for $i\in \{1,\ldots,k\}$. As $U_i\subset U$, we have $U_i^\perp\supset U^\perp$ and hence $U_i^\perp \cap V_i \supset U^\perp\cap V_i$. It remains to show that $U_i^\perp \cap V_i \subset U^\perp\cap V_i$. Pick any $\xi\in U_i^\perp \cap V_i$. As $\xi\in V_i$, Claim 1 implies that $\xi\perp V_j$ and hence $\xi\perp U_j$ for any $j\neq i$. This, together with the fact that $\xi\in U_i^\perp$, implies that $\xi\perp U_j$ for all $j\in\{1,\ldots,k\}$. Noting that $U=U_1\oplus\cdots\oplus U_k$ by \eqref{Uop}, we have $\xi\perp U$, i.e., $\xi\in U^\perp$. Thus, $\xi\in U^\perp \cap V_i$ and hence $U_i^\perp \cap V_i \subset U^\perp\cap V_i$ by the arbitrariness of $\xi$. This proves Claim 3.

By Claim 3, we can rewrite \eqref{Uperp} as
\begin{equation}\label{Uperpo}
U^\perp =(U_i^\perp \cap V_1)\oplus\cdots\oplus(U_k^\perp\cap V_k).
\end{equation}
Let $u_i=\dim U_i$, and $w_i=\dim (U_i^\perp \cap V_i)$ for $i\in\{1,\ldots,k\}$. Note that $U_i\subset V_i$, $\dim V_i=v_i$,  and $V_i$ is anisotropic by Claim 2. It follows from  Lemma \ref{equforani} that $\dim (U_i^\perp\cap V_i)=\dim V_i-\dim U_i$, i.e.,
\begin{equation}\label{wvu}
w_i=v_i-u_i.
\end{equation}  Note that $U_i$ is $A$-invariant and $U_i\subset V_i$. We see that $\chi(\restr{A}{U_i};x)$ is a factor of $\chi(\restr{A}{V_i};x)$ and hence $\chi(\restr{A}{U_i};x)=(x-\lambda_i)^{u_i}$. Consequently, we have $\chi(\restr{A}{U};x)=(x-\lambda_1)^{u_1}\cdots(x-\lambda_k)^{u_k}$. Similarly, by \eqref{Uperpo}, we have
$\chi(\restr{A}{U^\perp};x)=(x-\lambda_1)^{w_1}\cdots(x-\lambda_k)^{w_k}$. Thus, \eqref{ff} holds by \eqref{wvu}. This completes the proof.
\end{proof}
\section{Proof of Theorem \ref{main}}\label{pt4}

An orthogonal matrix $Q$ is called \emph{regular} if $Qe=e$ (or equivalently, $Q^\T e=e$). An old result of Johnson and Newman \cite{johnson1980JCTB} states that two graphs $G$ and $H$ are generalized cospectral if and only if there exists a regular orthogonal matrix $Q$ such that $Q^\T A(G)Q=A(H)$. For controllable graphs, the corresponding matrix $Q$ is unique and rational.
\begin{lemma}[\cite{johnson1980JCTB,wang2006EuJC}]\label{gcQ}
	Let $G\in \mathcal{G}_n$ and $H$ be a graph generalized cospectral with $G$. Then there exists a unique regular rational orthogonal matrix $Q$ such that $Q^\T A(G) Q=A(H)$. Moreover, the unique $Q$ satisfies $Q^\T =W(H)W^{-1}(G)$ and hence is rational.
\end{lemma}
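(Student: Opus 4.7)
The plan is to prove both existence and uniqueness simultaneously by exhibiting the formula $Q^T := W(H) W(G)^{-1}$ directly and verifying it satisfies every required property. The matrix is well-defined because $G \in \mathcal{G}_n$ forces $W(G)$ invertible, and its rationality is automatic since $W(G)$ and $W(H)$ have integer entries.

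The key technical input, and the step I expect to be the main obstacle, is the classical Johnson--Newman identity $W(G)^T W(G) = W(H)^T W(H)$ for generalized cospectral graphs. My plan is to derive it via the matrix determinant lemma, which gives
\begin{equation}
\chi(A+tJ;x) = \chi(A;x)\bigl(1 - t\, e^T (xI - A)^{-1} e\bigr).
\end{equation}
Since the left-hand side is linear in $t$, agreement of the generalized spectra of $G$ and $H$ (i.e., agreement at $t=0$ and at $t=-1$, the latter coming from cospectrality of the complements) forces agreement for all $t$, hence $e^T(xI - A(G))^{-1} e = e^T(xI - A(H))^{-1} e$ as rational functions of $x$. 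Expanding this resolvent as a power series in $1/x$ recovers all moments $e^T A^k e$, and since the $(i,j)$-entry of $W^T W$ is $e^T A^{i+j-2} e$, the desired identity follows.

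With this identity in hand, the remaining verifications should all be routine linear algebra. Orthogonality reduces to $Q Q^T = W(G)^{-T}(W(H)^T W(H)) W(G)^{-1} = W(G)^{-T}(W(G)^T W(G))W(G)^{-1} = I$; regularity follows because the first column of $W(G)$ is $e$, so $W(G)^{-1} e$ is the first standard basis vector and $Q^T e = W(H)\cdot(1,0,\ldots,0)^T = e$; and the intertwining $Q^T A(G) Q = A(H)$ follows from the Cayley--Hamilton identity $W(G)^{-1} A(G) W(G) = C$, where $C$ is the companion matrix of the common characteristic polynomial $\chi(A(G);x) = \chi(A(H);x)$, together with the analogous identity for $H$. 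For uniqueness, if $Q'$ is any regular orthogonal matrix with $(Q')^T A(G) Q' = A(H)$, I will iterate the intertwining $A(G) Q' = Q' A(H)$ against the vector $Q' e = e$ (which holds because $(Q')^T e = e$ and $Q'$ is orthogonal) to get $A(G)^k e = Q' A(H)^k e$ for every $k \geq 0$. Collecting these columns reads as $W(G) = Q' W(H)$, which pins down $Q' = W(G) W(H)^{-1}$, matching our construction and closing the argument.
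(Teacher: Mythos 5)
Your proposal is correct, and it reconstructs the standard argument from the cited sources (Johnson--Newman for the identity $W(G)^\T W(G)=W(H)^\T W(H)$ via the rank-one determinant expansion of $\chi(A+tJ;x)$, and Wang--Xu for the explicit formula and uniqueness via $A(G)^k e = Q'A(H)^k e$); the paper itself states this lemma with citations and gives no proof of its own. All the verifications check out, including the two points that need care: $W(H)$ is invertible because $\det W(H)^2=\det W(G)^2\neq 0$, and the intertwining step uses orthogonality to rewrite $(Q^\T)^{-1}=Q$ together with the common companion matrix $C$ of $\chi(A(G);x)=\chi(A(H);x)$.
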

For a controllable graph $G$, define $\mathcal{Q}(G)$ to be the set of all regular rational orthogonal matrices  $Q$ such that $Q^\T A(G)Q$ is an adjacency matrix. For a rational matrix $Q$, the \emph{level} of $Q$, denoted by $\ell(Q)$, or simply $\ell$, is the smallest positive integer $k$ such that $kQ$ is an integral matrix. Note that a  regular rational orthogonal matrix with level one is a permutation matrix.
The following two important results are direct consequences of Lemma \ref{gcQ}.
\begin{lemma}[\cite{wang2006EuJC}]\label{pdn}
		Let $G\in \mathcal{G}_n$ and $d_n$ be the last invariant factor of $W$. Then $\ell(Q)\mid d_n$ for any $Q\in \mathcal{Q}(G)$.
\end{lemma}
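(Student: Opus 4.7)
The plan is to derive the divisibility $\ell(Q) \mid d_n$ directly from the explicit formula $Q^\T = W(H)W(G)^{-1}$ supplied by Lemma \ref{gcQ}, together with a basic property of the Smith normal form: the largest invariant factor of an invertible integer matrix $M$ is the smallest positive integer $k$ such that $kM^{-1}$ is integral.

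First, I would use Lemma \ref{gcQ} to write $Q^\T = W(H)W(G)^{-1}$ for the (unique) rational orthogonal matrix associated with some generalized cospectral mate $H$ of $G$; every element of $\mathcal{Q}(G)$ arises this way by definition. Since $A(H)$ has $0/1$ entries and $e$ is integral, $W(H)$ is an integer matrix. So it remains to bound the denominators of $W(G)^{-1}$.

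Next, I would bring in the Smith normal form $W(G) = U\,\Sigma\,V$, where $U,V\in\mathrm{GL}_n(\mathbb{Z})$ and $\Sigma=\diag[d_1,d_2,\ldots,d_n]$ with $d_1\mid d_2\mid\cdots\mid d_n$. Then
\begin{equation}
d_n\,W(G)^{-1} = V^{-1}\,\bigl(d_n\Sigma^{-1}\bigr)\,U^{-1} = V^{-1}\,\diag\!\left[\tfrac{d_n}{d_1},\tfrac{d_n}{d_2},\ldots,\tfrac{d_n}{d_n}\right]U^{-1},
\end{equation}
and each ratio $d_n/d_i$ is a positive integer because $d_i\mid d_n$. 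Thus $d_n\,W(G)^{-1}$ is integral. Combining the two observations, $d_n Q^\T = W(H)\cdot\bigl(d_n W(G)^{-1}\bigr)$ is a product of integer matrices, hence integral, and therefore $d_n Q$ is integral. By the definition of level, $\ell(Q)\mid d_n$.

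There is no real obstacle here; the only thing one must notice is that divisibility by $d_n$ is exactly the right way to clear the denominators of $W(G)^{-1}$, and this is immediate from the invariant factor chain. The argument is entirely self-contained once Lemma \ref{gcQ} and the existence of the Smith normal form of $W(G)$ are granted.
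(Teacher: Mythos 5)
Your proof is correct and is exactly the intended argument: the paper states this lemma without proof, citing \cite{wang2006EuJC} and describing it as a direct consequence of Lemma \ref{gcQ}, and your derivation (write $Q^\T=W(H)W(G)^{-1}$, clear denominators via the Smith normal form since $d_i\mid d_n$ for all $i$) is precisely that standard consequence. No gaps; the only point worth making explicit is that the set of integers $k$ with $kQ$ integral is an ideal generated by $\ell(Q)$, which is why $d_nQ$ integral yields $\ell(Q)\mid d_n$.
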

\begin{lemma}[\cite{wang2006EuJC}]\label{onelevel}
	Let $G\in \mathcal{G}_n$. Then $G$ is DGS if and only if $\ell(Q)=1$ for each $Q\in \mathcal{Q}(G)$.
\end{lemma}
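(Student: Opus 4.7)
The plan is to derive both directions directly from the Johnson--Newman correspondence recorded in Lemma~\ref{gcQ}, together with the observation (already noted above) that a regular rational orthogonal matrix of level one is nothing but a permutation matrix. So the proof amounts to translating ``isomorphic'' and ``generalized cospectral'' into statements about which regular rational orthogonal matrices $Q$ appear as conjugators of $A(G)$, and then invoking the uniqueness part of Lemma~\ref{gcQ} to match them up.

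For the ($\Leftarrow$) direction I would proceed as follows. Suppose every $Q\in\mathcal{Q}(G)$ has level one, and let $H$ be any graph generalized cospectral with $G$. Lemma~\ref{gcQ} supplies a (unique) regular rational orthogonal matrix $Q$ with $Q^\T A(G)Q = A(H)$; by the defining property this $Q$ lies in $\mathcal{Q}(G)$. The hypothesis then gives $\ell(Q)=1$, so $Q$ is a permutation matrix, whence $G\cong H$. Hence $G$ is DGS.

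For the ($\Rightarrow$) direction I would fix an arbitrary $Q\in \mathcal{Q}(G)$ and set $A(H):=Q^\T A(G)Q$, which is an adjacency matrix by the definition of $\mathcal{Q}(G)$. Since $Q$ is orthogonal, $G$ and $H$ share the ordinary spectrum. The regularity $Qe=e$ gives $Q^\T J Q = J$, so
\begin{equation*}
Q^\T A(\overline G) Q = Q^\T(J-I-A(G))Q = J-I-A(H) = A(\overline H),
\end{equation*}
and hence $\overline G$ and $\overline H$ are cospectral as well; thus $H$ is generalized cospectral with $G$. Because $G$ is DGS, there exists a permutation matrix $P$ with $P^\T A(G)P = A(H)$. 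But $P$ itself is a regular rational orthogonal matrix realizing the same conjugation, so the uniqueness clause of Lemma~\ref{gcQ} forces $Q=P$, and therefore $\ell(Q)=1$.

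The main subtlety, and essentially the only place that needs genuine care, is the use of the regular condition $Qe=e$ in the forward direction: without it one would only obtain cospectrality of $G$ and $H$ rather than generalized cospectrality, and the DGS hypothesis would have nothing to act on. Everything else is a direct application of Lemma~\ref{gcQ} together with the characterization of permutation matrices as precisely the level-one regular rational orthogonal matrices.
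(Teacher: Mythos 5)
Your proof is correct. The paper itself states this lemma as a citation to \cite{wang2006EuJC} and gives no proof, so there is nothing internal to compare against; your argument is the standard one from that reference: the backward direction is immediate from Lemma~\ref{gcQ} plus the fact that a level-one regular rational orthogonal matrix is a permutation matrix, and the forward direction correctly uses $Q^\T JQ=J$ (from regularity and orthogonality) to upgrade cospectrality to generalized cospectrality before invoking the DGS hypothesis and the uniqueness clause of Lemma~\ref{gcQ} to conclude $Q=P$.
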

\begin{lemma}[\cite{wang2006EuJC}]\label{tdw}
	For any graph $G$ of order $n$, we have $2^{\lfloor\frac{n}{2}\rfloor}\mid\det W$.
\end{lemma}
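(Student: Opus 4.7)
The plan is to bound the $2$-adic valuation of $\det W$ by a modulo-$2$ analysis of the Gram matrix $H := W^{\T} W$. Since $H_{i,j} = e^{\T} A^{i+j-2} e$ equals the number of walks of length $i+j-2$ in $G$, the main combinatorial input is the identity
\begin{equation*}
e^{\T} A^{k} e \equiv 0 \pmod{2} \quad \text{for every } k \geq 1,
\end{equation*}
which forces every entry of $H$ other than $H_{1,1} = n$ to be even.

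To establish this identity I would use the reverse-walk involution $v_{0}v_{1}\cdots v_{k} \mapsto v_{k}v_{k-1}\cdots v_{0}$ on walks of length $k$ in $G$. For odd $k$ the involution is fixed-point-free, since a fixed walk would require $v_{(k-1)/2} = v_{(k+1)/2}$, producing a loop in the simple graph $G$. For even $k = 2m$, non-palindromic walks still pair up, while the palindromic walks of length $2m$ are in bijection with walks of length $m$ via $(v_{0},\ldots,v_{2m}) \leftrightarrow (v_{0},\ldots,v_{m})$, yielding the recursion
\begin{equation*}
e^{\T} A^{2m} e \equiv e^{\T} A^{m} e \pmod{2}.
\end{equation*}
Writing any $k \geq 1$ as $2^{a} m$ with $m$ odd and iterating the recursion reduces the parity of $e^{\T} A^{k} e$ to that of $e^{\T} A^{m} e$, which vanishes modulo $2$ by the odd case.

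Granted the above identity, $H \pmod{2}$ has at most one nonzero entry, so $\rank_{2} H \leq 1$ and $\nullity_{2} H \geq n - 1$. Invoking the Smith normal form of the integer matrix $H$, at least $n-1$ of its invariant factors are even, and therefore $2^{n-1} \mid \det H = (\det W)^{2}$. Taking $2$-adic valuations yields $v_{2}(\det W) \geq \lceil (n-1)/2 \rceil = \lfloor n/2 \rfloor$, which is the desired divisibility (the case $\det W = 0$ being trivial). The main conceptual step is the walk-count parity identity; once the reverse-walk involution and the palindrome recursion are in place, the passage through $H$ and the Smith normal form is routine.
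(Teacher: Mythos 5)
Your argument is correct. The paper itself gives no proof of this lemma (it is quoted from the cited reference \cite{wang2006EuJC}), but your route --- showing every entry of the Gram matrix $W^{\T}W$ except the $(1,1)$ entry is even and then passing through $\rank_2(W^{\T}W)\le 1$ and $(\det W)^2=\det(W^{\T}W)$ --- is essentially the standard proof from that reference; the only cosmetic difference is that you establish the parity identity $e^{\T}A^k e\equiv 0\pmod 2$ combinatorially via the walk-reversal involution rather than by the usual $\mathbb{F}_2$ computation ($v^{\T}Av=\sum_i a_{ii}v_i^2=0$ for odd steps and $v^{\T}v\equiv e^{\T}v$ for the halving step).
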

  For nonzero integers $d$, $m$ and positive integer $k$, we use $d^k\mid\mid m$ to indicate that $d^k$ precisely divides $m$, i.e., $d^k\mid m$ but $d^{k+1}\nmid m$.  The following result was obtained in \cite{wang2017JCTB} using an involved argument; we refer to \cite{qiuarXiv} for a simpler proof.
\begin{lemma}[\cite{wang2017JCTB,qiuarXiv}]\label{oddl}
	Let $G\in \mathcal{G}_n$. If $2^{\lfloor\frac{n}{2}\rfloor}\mid\mid \det W$  then any $Q\in\mathcal{Q}(G)$ has  odd level.
\end{lemma}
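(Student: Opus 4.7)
The plan is to argue by contrapositive: assume some $Q \in \mathcal{Q}(G)$ has even level $\ell$, and derive that $v_2(\det W) > \lfloor n/2\rfloor$, contradicting the hypothesis $2^{\lfloor n/2\rfloor}\,\|\,\det W$. Let $H$ be the graph with $A(H) = Q^\T A(G) Q$ and set $M = \ell Q \in \mathbb{Z}^{n\times n}$, so that $M^\T M = M M^\T = \ell^2 I$, $Me = M^\T e = \ell e$, and Lemma~\ref{gcQ} yields the intertwining identities $M^\T W(G) = \ell W(H)$ and dually $M\, W(H) = \ell W(G)$.

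First I reduce modulo $2$. Since $\ell$ is even, $\overline{M}^\T \overline{M} = 0$ over $\mathbb{F}_2$, so the column space of $\overline{M}$ is totally isotropic and $\rank_2 M \le \lfloor n/2 \rfloor$. The identities above give $\overline{M}^\T \overline{W(G)} = 0$ and $\overline{M}\,\overline{W(H)} = 0$, hence the inclusions $\textup{col}(\overline{W(G)}) \subseteq \ker \overline{M}^\T$ and $\textup{col}(\overline{W(H)}) \subseteq \ker \overline{M}$.

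Next I exploit the hypothesis $2^{\lfloor n/2\rfloor}\,\|\,\det W$ to pin down the 2-adic Smith normal form of $W(G)$. By Lemma~\ref{tdw}, $2^{\lfloor n/2\rfloor}\mid\det W$, which forces at least $\lfloor n/2\rfloor$ invariant factors to be even; tightness of the hypothesis then compels exactly $\lfloor n/2\rfloor$ even invariant factors, each with 2-part precisely $2$, while the remaining $\lceil n/2\rceil$ are odd. Consequently $\rank_2 W(G) = \lceil n/2 \rceil$, and over $\mathbb{Z}_2$ we can write $W(G) = U D V$ with $U,V \in \mathrm{GL}_n(\mathbb{Z}_2)$ and $D = \diag[\underbrace{1,\ldots,1}_{\lceil n/2\rceil},\underbrace{2,\ldots,2}_{\lfloor n/2\rfloor}]$; the same SNF holds for $W(H)$, since generalized cospectrality preserves the invariant factors of $W$.

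Substituting the SNFs into $M^\T W(G) = \ell W(H)$ and changing basis gives a 2-adic identity $(U')^{-1} M^\T U = \ell\, D\, (V'V^{-1})\, D^{-1}$. Because conjugation by $D$ multiplies the $(i,j)$ entries of the $\lceil n/2\rceil$, $\lfloor n/2\rfloor$ block decomposition by $\pm 1$ powers of $2$, comparing 2-adic valuations entrywise forces $(U')^{-1} M^\T U$, and hence $M$ itself, to have two of its four blocks vanishing modulo $2$. Combining this block structure with $\overline{M}^\T \overline{M} = 0$, $\overline{M}^\T \overline{e} = 0$, and the dual relation $MW(H) = \ell W(G)$ should trap an odd entry of $M$ in a position incompatible with orthogonality, thereby producing an additional even invariant factor of $W$ and contradicting tightness. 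The main obstacle will be the delicate 2-adic bookkeeping: the mod-2 rank bound $\rank_2 M \le \lfloor n/2\rfloor$ alone is consistent with the SNF of $W$, so the actual contradiction must come from lifting to $\mathbb{Z}/4\mathbb{Z}$ (using $M^\T M \equiv 0 \pmod 4$) and simultaneously exploiting both $M^\T W(G) = \ell W(H)$ and $MW(H) = \ell W(G)$ to interlock the block structure of $M$ with that of $W(G)$ and $W(H)$.
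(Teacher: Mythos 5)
First, a point of reference: the paper does not prove Lemma~\ref{oddl} at all --- it is imported from \cite{wang2017JCTB}, with \cite{qiuarXiv} cited for a simpler proof --- so your attempt can only be measured against those sources. Your opening is the standard and correct one: pass to $M=\ell Q$, record $M^\T M=\ell^2I\equiv 0\pmod 4$, $M^\T e=\ell e\equiv 0\pmod 2$ and $M^\T W(G)=\ell W(H)\equiv 0\pmod 2$, and aim to contradict $2^{\lfloor n/2\rfloor}\mid\mid\det W$. But there are two gaps, one repairable and one fatal. The repairable one: ``$2^{\lfloor n/2\rfloor}\mid\det W$ forces at least $\lfloor n/2\rfloor$ invariant factors to be even'' is a non sequitur --- the matrix $\diag[1,\ldots,1,2^{\lfloor n/2\rfloor}]$ has the same $2$-adic valuation of the determinant with a single even invariant factor. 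What you actually need is the separate rank bound $\rank_2 W(G)\le\lceil n/2\rceil$, valid for every graph (this is what underlies Lemma~\ref{tdw}; cf.\ \cite{wang2021LAA}); only combined with that does tightness of the valuation pin the $2$-part of the Smith normal form down to $\diag[1,\ldots,1,2,\ldots,2]$.

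The fatal gap is your last paragraph. As you yourself observe, everything you establish modulo $2$ --- the bound $\rank_2M\le\lfloor n/2\rfloor$ and the block shape of $(U')^{-1}M^\T U$ --- is \emph{consistent} with the hypothesis, so the contradiction must come from a modulo-$4$ computation, and that computation is never performed: ``should trap an odd entry of $M$ in a position incompatible with orthogonality'' is a hope, not an argument. (Moreover, the block structure you derive is a property of $(U')^{-1}M^\T U$, not of ``$M$ itself''; $U$ and $U'$ are arbitrary $2$-adic unimodular matrices, so the structure does not transfer to $M$.) The known proofs turn at exactly this point to a single column: since $\ell$ is the exact level and is even, some column $v$ of $M$ satisfies $v\not\equiv 0\pmod 2$, while the three relations above give $W(G)^\T v\equiv0\pmod2$, $e^\T v\equiv0\pmod 2$ and $v^\T v=\ell^2\equiv0\pmod 4$; the heart of the lemma is the assertion that when $2^{\lfloor n/2\rfloor}\mid\mid\det W$ no such $v$ exists, proved by analyzing the quadratic form $v\mapsto v^\T v$ modulo $4$ on an explicit spanning set of the mod-$2$ null space of $W^\T$. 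That quadratic-form analysis is precisely the ``involved argument'' the paper alludes to, and it is absent from your proposal.
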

\begin{lemma}[\cite{wang2021LAA}]\label{tmf}
	For any graph $G$ of order $n$,  at most $\lfloor\frac{n}{2}\rfloor$ invariant factors of W are congruent to $2$ modulo $4$.
\end{lemma}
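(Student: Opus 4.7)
The plan is to combine (i) an isotropy bound on the $A$-cyclic subspace of $\mathbb{F}_2^n$ controlling $r := \rank_{\mathbb{F}_2} W$ with (ii) a snake-lemma identification of the count $j$ of invariant factors $\equiv 2\pmod 4$ as the rank of an explicit Bockstein map, sharpened by (iii) a mod-$4$ refinement when $r$ is sub-maximal.

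For (i), I would first prove the arithmetic identity $e^\T A^k e \equiv 0 \pmod 2$ for every $k\ge 1$. Since $A$ is symmetric with zero diagonal, $x^\T A x \equiv 0 \pmod 2$ for every $x \in \mathbb{F}_2^n$, covering odd $k$; and the identity $v^\T v = v^\T e$ over $\mathbb{F}_2$ reduces $e^\T A^{2m}e$ to $e^\T A^m e$, finishing the induction. Consequently, for $x = p(A)e$ and $y = q(A)e$ in $V_e := \mathrm{Span}_{\mathbb{F}_2}(e, Ae, A^2e,\ldots)$, the standard bilinear form satisfies $x^\T y \equiv n\,p(0)\,q(0) \pmod 2$. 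Hence $V_e$ is totally isotropic when $n$ is even and carries a rank-one form when $n$ is odd; using $\dim V_e + \dim V_e^\perp = n$ and the formula for $V_e \cap V_e^\perp$ as the radical, one deduces $r = \dim V_e \le \lceil n/2\rceil$.

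For (ii), write $d_1 \mid \cdots \mid d_n$ and let $a_2$ denote the number of $d_i$ with $4 \mid d_i$, so that $r + j + a_2 = n$ and the goal $j \le \lfloor n/2\rfloor$ is equivalent to $r + a_2 \ge \lceil n/2 \rceil$. Applying the snake lemma to multiplication by $2$ on
\[
0 \to \mathbb{Z}^n \stackrel{W}{\to} \mathbb{Z}^n \to \mathrm{coker}(W) \to 0,
\]
one obtains isomorphisms $\ker(W \bmod 2) \cong \mathrm{coker}(W)[2]$ and $\mathrm{coker}(W \bmod 2) \cong \mathrm{coker}(W)/2$ under which the natural map $\phi\colon v\mapsto (Wv)/2 \bmod \mathrm{Im}(W \bmod 2)$ corresponds to the composition $\mathrm{coker}(W)[2] \hookrightarrow \mathrm{coker}(W) \twoheadrightarrow \mathrm{coker}(W)/2$; a direct Smith-basis computation then gives $\rank_{\mathbb{F}_2} \phi = j$. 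The problem reduces to bounding $\rank \phi \le \lfloor n/2 \rfloor$.

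For (iii), the case $r = \lceil n/2 \rceil$ is immediate since $\rank \phi \le n - r = \lfloor n/2 \rfloor$. In the sub-maximal case $r < \lceil n/2 \rceil$, the arithmetic input of step (i) shows that the Hankel matrix $W^\T W$ has all entries divisible by $2$ except the top-left, so $W^\T W - n E_{11} = 2G$ for an explicit integer matrix $G$. The plan is to use the secondary bilinear form $(v, v') \mapsto v^\T G v' \pmod 2$ on $\ker(W \bmod 2)$, together with the isotropy of $V_e$ and the shift identity $AW = WC$ (where $C$ is the companion matrix of $\chi(A;x)$), to exhibit $\lceil n/2 \rceil - r$ independent elements of $\ker \phi$, yielding $\rank\phi \le \lfloor n/2 \rfloor$. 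The principal obstacle is executing this mod-$4$ lifting cleanly in the sub-maximal rank regime: one must translate the ``extra isotropy'' of $V_e$ at the mod-$2$ level into extra divisibility by $4$ for suitable column combinations, which requires a careful interplay between the Hankel structure of $G$ and the Bockstein map $\phi$.
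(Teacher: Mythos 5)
The paper itself offers no proof of this lemma: it is quoted verbatim from \cite{wang2021LAA}, where it is the main theorem, so your attempt has to be judged as a standalone proof of that theorem. Your steps (i) and (ii) are correct. The induction giving $e^\T A^k e\equiv 0\pmod 2$ for all $k\ge 1$, the resulting (near-)total isotropy of the Krylov space $V_e$ over $\mathbb{F}_2$, and the bound $r=\rank_2 W\le\lceil\frac{n}{2}\rceil$ constitute exactly the standard argument behind Lemma \ref{tdw}; and your Smith-basis computation identifying the number $j$ of invariant factors congruent to $2$ modulo $4$ with the rank of the Bockstein map $\phi$ is also right.

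The gap is step (iii), and it is not a cosmetic one. The case $r=\lceil\frac{n}{2}\rceil$ follows trivially from $j\le n-r$; the entire content of the lemma lives in the sub-maximal case $r<\lceil\frac{n}{2}\rceil$, where one must exhibit $\lceil\frac{n}{2}\rceil-r$ independent elements of $\ker\phi$, i.e.\ show that many invariant factors are divisible by $4$. There you give only a plan and explicitly flag the ``principal obstacle'' without resolving it, so the hard half of the theorem is simply not proved. Moreover, the obstacle is genuine: the natural implementation of your idea --- showing that the vectors $w_v=\frac{1}{2}W\tilde v$ for $v\in\ker(W\bmod 2)$, together with $V_e$, span a (near-)isotropic subspace of $\mathbb{F}_2^n$, which would bound $j=\dim(V_e+\mathrm{span}\{w_v\})-r$ by $\lceil\frac{n}{2}\rceil-r$ --- already fails at the first step, since $w_v^\T A^k e=\frac{1}{2}\sum_i v_i\, e^\T A^{i+k-1}e$ involves the integers $\frac{1}{2}e^\T A^m e$, which need not be even. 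In other words, the mod-$2$ isotropy of $V_e$ does not automatically upgrade to the mod-$4$ divisibility you need; one has to control $e^\T A^m e$ modulo $4$ (via identities such as $e^\T A^{2m}e=(e^\T A^m e)^2-2\sum_{i<j}(A^m e)_i(A^m e)_j$ and a careful analysis of the Hankel form $\frac{1}{2}(W^\T W-nE_{11})$ on the kernel), and that delicate bookkeeping is precisely the technical core of the proof in \cite{wang2021LAA}. As written, your proposal establishes only the easy case and a correct reformulation of the hard one.
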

\begin{corollary}\label{oddlevel}
	Let $G\in \mathcal{G}_n$ and $d_n$ be the last invariant factor of $W$. If $d_n\equiv 2\pmod{4}$  then any $Q\in \mathcal{Q}(G)$ has odd level.
	\end{corollary}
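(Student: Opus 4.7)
The plan is to reduce Corollary \ref{oddlevel} to Lemma \ref{oddl} by showing that the hypothesis $d_n\equiv 2\pmod{4}$ forces the precise $2$-adic valuation $2^{\lfloor n/2\rfloor}\mid\mid\det W$. The key observation is that the invariant factors $d_1\mid d_2\mid\cdots\mid d_n$ form a divisibility chain terminating at $d_n$; since $d_n\equiv 2\pmod{4}$ means $4\nmid d_n$, every $d_i$ satisfies $4\nmid d_i$ as well. Hence each $d_i$ is either odd or congruent to $2$ modulo $4$, so each invariant factor contributes at most one factor of $2$ to $\det W=\pm d_1d_2\cdots d_n$.

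Next I would invoke Lemma \ref{tmf}, which says that at most $\lfloor n/2\rfloor$ of the invariant factors can be congruent to $2$ modulo $4$. Combined with the previous step, this yields the upper bound
\begin{equation}
v_2(\det W)\le \lfloor n/2\rfloor,
\end{equation}
where $v_2$ denotes the $2$-adic valuation. On the other hand, Lemma \ref{tdw} provides the matching lower bound $v_2(\det W)\ge \lfloor n/2\rfloor$. Therefore $2^{\lfloor n/2\rfloor}\mid\mid\det W$ precisely.

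Finally, Lemma \ref{oddl} immediately gives that every $Q\in\mathcal{Q}(G)$ has odd level, which is the desired conclusion.

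The argument is short and I do not anticipate any real obstacle; the only subtlety is confirming that the divisibility chain of invariant factors transfers the $4\nmid d_n$ condition to all smaller $d_i$, but this is immediate from the definition of Smith normal form. The proof is essentially a clean bookkeeping of $2$-adic valuations, squeezing $v_2(\det W)$ between the two bounds supplied by Lemmas \ref{tdw} and \ref{tmf}.
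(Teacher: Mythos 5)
Your proposal is correct and follows exactly the paper's own argument: both deduce from the divisibility chain that every invariant factor is odd or $2$ modulo $4$, use Lemma \ref{tmf} for the upper bound and Lemma \ref{tdw} for the lower bound on the $2$-adic valuation of $\det W$, and then conclude via Lemma \ref{oddl}. No differences worth noting.
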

\begin{proof}
Since $d_n\equiv 2\pmod{4}$ and $d_1\mid d_2\mid \cdots\mid d_n$, each invariant factor is either odd or congruent to $2$ modulo $4$. It follows from Lemma \ref{tmf} that $2^{\lfloor\frac{n}{2}\rfloor+1}\nmid\det W$. By Lemma \ref{tdw}, we see that $2^{\lfloor\frac{n}{2}\rfloor}\mid\mid\det W$. The assertion follows by Lemma \ref{oddl}.	
\end{proof}
The remaining part of this section is devoted to showing that, for any $Q\in \mathcal{Q}(G)$ with  $G$  satisfying the condition of Theorem \ref{main}, the level $\ell(Q)$ contains none odd prime factor.  We begin with a fundamental property on the column vectors of $W$.
\begin{lemma}[\cite{liesen2012,qiu2019DM}]\label{firstr}
	Let $r=\rank_p W$. Then the first $r$ columns of $W$ are linearly independent over $\overline{\mathbb{F}}_p$ 	and hence constitute a basis of the column space of $W$.
\end{lemma}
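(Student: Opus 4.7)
The plan is to exploit the fact that $W$ is the Krylov matrix $[e, Ae, \ldots, A^{n-1}e]$, so its columns arise by repeatedly applying $A$ to $e$. The standard way to prove such a statement is to locate the first index at which the Krylov sequence becomes dependent, and show that from that point on every further column lies in the span of the earlier ones. The subtlety beyond the real-number version is only cosmetic: the rank of an integer matrix over $\mathbb{F}_p$ and over $\overline{\mathbb{F}}_p$ coincide, because linear dependence of vectors with entries in $\mathbb{F}_p$ is detected by the vanishing of $\mathbb{F}_p$-valued minors.

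First I would let $k$ be the smallest nonnegative integer such that $A^k e$ lies in the $\mathbb{F}_p$-span of $e, Ae, \ldots, A^{k-1}e$; such a $k$ exists with $k\le n$ since $W$ has only $n$ columns. By the minimality of $k$, the vectors $e, Ae, \ldots, A^{k-1}e$ are linearly independent over $\mathbb{F}_p$. I would then prove by induction on $j\ge k$ that $A^j e$ belongs to $U:=\Span_{\mathbb{F}_p}(e,Ae,\ldots,A^{k-1}e)$. The base case $j=k$ is the defining property of $k$. For the inductive step, writing $A^j e=\sum_{i=0}^{k-1}c_i A^i e$ and applying $A$ gives $A^{j+1}e=\sum_{i=0}^{k-1}c_i A^{i+1}e$; each summand $A^{i+1}e$ with $i\le k-2$ already lies in $U$ trivially, and the remaining term $c_{k-1}A^k e$ lies in $U$ by the base case. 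Hence $A^{j+1}e\in U$, completing the induction.

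It follows that the column space of $W$ over $\mathbb{F}_p$ equals $U$, which has dimension exactly $k$, so $r=\rank_p W=k$ and the first $r$ columns form an $\mathbb{F}_p$-basis of that column space. To upgrade to $\overline{\mathbb{F}}_p$, I would note that the columns of $W$ have entries in $\mathbb{F}_p$, and a set of such vectors is linearly dependent over $\overline{\mathbb{F}}_p$ if and only if it is linearly dependent over $\mathbb{F}_p$ (equivalently, the rank of a matrix is invariant under field extension, since it is witnessed by the largest nonvanishing minor, an element of $\mathbb{F}_p$). Thus the first $r$ columns are also linearly independent over $\overline{\mathbb{F}}_p$ and constitute a basis of the $\overline{\mathbb{F}}_p$-column space of $W$. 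The only mildly non-routine point is the induction step above; once one observes that applying $A$ shifts the Krylov sequence by one and reuses the base case to absorb the trailing term, the argument is essentially mechanical, and there is no real obstacle.
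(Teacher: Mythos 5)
Your argument is correct and is precisely the standard Krylov-sequence proof (first index of dependence, then induction by applying $A$, plus invariance of rank under field extension); the paper itself gives no proof of this lemma, only citations to \cite{liesen2012,qiu2019DM}, where essentially this same argument appears.
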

\begin{definition}\normalfont
Let $p$ be an odd prime.	The \emph{$p$-main polynomial} of a graph $G$, denoted by $m_p(G;x)$, is  the monic polynomial $f\in \mathbb{F}_p[x]$ of
	smallest degree such that $f(A)e = 0$.
\end{definition}
We recall that the ordinary \emph{main polynomial} $m(G;x)$ (over $\mathbb{Q}$) can be defined in the same manner; see \cite{teranishi2002LMA,rowlinson2007AADM}. It is known that the ordinary main polynomial is invariant under generalized cospectrality. Unfortunately, the $p$-main polynomial does not have such a nice property in general. In other words, two generalized cospectral graphs $G$ and $H$ may have different $p$-main polynomials for some odd prime $p$; see Remark \ref{dispmain} in Section \ref{dissec}. However, a key intermediate result of this paper shows that such an inconsistency can never happen under the restriction that one graph, say $G$, satisfies the assumption of Theorem \ref{main}. The overall idea is simple. We shall show that under the condition of Theorem \ref{main}, there is a direct connection between the $p$-main polynomial $m_p(G;x)$ and the polynomial $\Phi_p(G;x)$ which is invariant under generalized cospectrality (see Eq. \eqref{formp} in Lemma \ref{basicinq}).

To simplify the notations in the following lemmas, we fix a graph $G$ and use $A$ and $W$ to denote the adjacency matrix and walk matrix of $G$, respectively.
\begin{definition}
	$A_t=A+tJ$ and $W_t=[e,A_t e,\ldots,A_t^{n-1}e]$ for $t\in \overline{\mathbb{F}}_p$.
\end{definition}
\begin{lemma}\label{invt}
	$\mathcal{N}(W_t^\T)$ is constant on $t\in \overline{\mathbb{F}}_p$.
\end{lemma}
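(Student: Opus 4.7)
The plan is to argue that the column space of $W_t$ is constant in $t$, from which the statement follows immediately, since $\mathcal{N}(W_t^\T)$ coincides with the orthogonal space of the column space of $W_t$ (a vector $v$ satisfies $W_t^\T v=0$ precisely when $v$ is orthogonal to every column of $W_t$, and $\dim V^\perp=n-\dim V$ for any subspace $V$ over $\overline{\mathbb{F}}_p$).

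First I would identify the column space of $W_t$ with the Krylov subspace
\[
K(A_t,e):=\Span\{e,A_te,A_t^{2}e,\ldots\}.
\]
By Cayley--Hamilton over $\overline{\mathbb{F}}_p$, every power $A_t^{k}$ with $k\ge n$ is a linear combination of $I,A_t,\ldots,A_t^{n-1}$, so $K(A_t,e)$ equals the span of the first $n$ iterates, which is precisely the column space of $W_t$.

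The key step is to prove $K(A_t,e)=K(A_0,e)=:K$ for every $t\in\overline{\mathbb{F}}_p$. The decisive observation is that $Jw=(e^\T w)\,e$ for any $w$, so $Jw$ is always a scalar multiple of $e$. Now $K$ is $A$-invariant by its definition and contains $e$; hence for any $w\in K$,
\[
A_tw=Aw+t(e^\T w)\,e\in K,
\]
which shows that $K$ is $A_t$-invariant. Since $e\in K$, this forces $K(A_t,e)\subseteq K$. The reverse inclusion follows by the symmetric argument applied to $A=A_t+(-t)J$: the space $K(A_t,e)$ is $A_t$-invariant, contains $e$, and $Jw\in \Span(e)\subseteq K(A_t,e)$, so $K(A_t,e)$ is $A$-invariant and therefore contains $K=K(A,e)$. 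Combining both inclusions gives $K(A_t,e)=K$.

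Putting these together, $\mathcal{N}(W_t^\T)=(\text{col}\,W_t)^\perp=K(A_t,e)^\perp=K^\perp=\mathcal{N}(W^\T)$, independent of $t$. There is no real obstacle here beyond spotting the rank-one nature of $J$, which is exactly what makes the Krylov subspace $K(A,e)$ stable under the one-parameter perturbation $A\mapsto A+tJ$; all remaining work is a routine invariance argument, valid verbatim over $\overline{\mathbb{F}}_p$.
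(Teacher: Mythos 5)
Your proof is correct and rests on the same key observation as the paper's, namely that $Jw=(e^\T w)e\in\Span\{e\}$, so that the Krylov space of $A_t$ generated by $e$ is independent of $t$; the paper packages this as the explicit identity $W_t=WU$ with $U$ unipotent upper triangular, whence $\mathcal{N}(W_t^\T)=\mathcal{N}(W^\T)$ directly, while you pass through equality of column spaces and their orthogonal spaces. The two arguments are essentially identical.
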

\begin{proof}
Note that $J\xi=(ee^\T)\xi=(e^\T \xi)e\in \Span\{e\}$ for any $\xi\in  \overline{\mathbb{F}}_p^n$. Thus,  for any $t\in  \overline{\mathbb{F}}_p$ and positive integer $k$,  there exist $c_0,\ldots,c_{k-1}\in  \overline{\mathbb{F}}_p$ such that
\begin{equation}
(A+tJ)^k e=A^k e+\sum_{i=0}^{k-1}c_iA^i e.
\end{equation}
It follows that there exists an $n\times n$ upper triangular matrix $U$ with 1 on the diagonal such that
\begin{equation}
[e,(A+tJ)e,\ldots,(A+tJ)^{n-1}e]=[e,Ae,\ldots,A^{n-1}e]U,
\end{equation}
i.e., $W_t=W U$. Thus, $W_t^\T=U^\T W^\T$ and hence  $\mathcal{N}(W_t^\T)=\mathcal{N}(W^\T)$ as $U^\T$ is invertible.
\end{proof}

\begin{lemma}\label{invs}
$\mathcal{N}(W^\T)$ is an  $(A+tJ)$-invariant subspace for any $t\in\overline{\mathbb{F}}_p$.
\end{lemma}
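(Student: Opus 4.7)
The plan is to combine Lemma \ref{invt} with Cayley--Hamilton together with symmetry of $A+tJ$. By Lemma \ref{invt}, $\mathcal{N}(W^\T)=\mathcal{N}(W_t^\T)$ for every $t\in\overline{\mathbb{F}}_p$, so it suffices to show that $\mathcal{N}(W_t^\T)$ is invariant under the action of $A_t:=A+tJ$. Note that $A_t$ is symmetric since both $A$ and $J=ee^\T$ are symmetric.

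Take any $\xi\in \mathcal{N}(W_t^\T)$. By the definition of $W_t$, the condition $W_t^\T\xi=0$ is exactly $\xi^\T A_t^i e=0$ for $i=0,1,\ldots,n-1$. To show $A_t\xi\in\mathcal{N}(W_t^\T)$, I need to verify that $(A_t\xi)^\T A_t^i e=0$ for the same range of $i$. Using symmetry of $A_t$, this rewrites as $\xi^\T A_t^{\,i+1}e=0$. For $i=0,1,\ldots,n-2$ the vector $A_t^{\,i+1}e$ is itself a column of $W_t$, so the equality is immediate from $\xi\in\mathcal{N}(W_t^\T)$.

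The only remaining case is $i=n-1$, i.e., $\xi^\T A_t^n e=0$. Here I would invoke the Cayley--Hamilton theorem applied to $A_t$ over $\overline{\mathbb{F}}_p$: $A_t^n$ is an $\overline{\mathbb{F}}_p$-linear combination of $I,A_t,\ldots,A_t^{n-1}$, and therefore $A_t^n e$ lies in $\Span\{e,A_t e,\ldots,A_t^{n-1}e\}$, which is the column space of $W_t$. Hence $\xi^\T A_t^n e=0$ as well, completing the verification.

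There is essentially no obstacle: the argument is the standard one for the fact that the column space of a Krylov matrix is the smallest invariant subspace containing the seed vector, transposed to the null space side using symmetry of $A_t$. The only mildly delicate point is that we are working over $\overline{\mathbb{F}}_p$ where the usual orthogonality intuition may fail, but since we never need $\overline{\mathbb{F}}_p^n=U\oplus U^\perp$ in this particular argument, Cayley--Hamilton and $A_t^\T=A_t$ are enough.
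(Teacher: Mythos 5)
Your proof is correct, but it handles the $tJ$ term differently from the paper. Both arguments share the same core: the null space of the transpose of a Krylov matrix generated by a symmetric matrix is invariant under that matrix, via Cayley--Hamilton (you verify this entrywise on the columns $A_t^i e$; the paper packages it as $AW=WC$ with $C$ the companion matrix, whence $W^\T A=C^\T W^\T$). The genuine difference is how the perturbation $tJ$ is disposed of. You first invoke Lemma \ref{invt} to identify $\mathcal{N}(W^\T)$ with $\mathcal{N}(W_t^\T)$ and then prove $A_t$-invariance of the latter directly, applying Cayley--Hamilton to $A_t$ itself. The paper instead proves $A$-invariance of $\mathcal{N}(W^\T)$ and then observes that any $\xi\in\mathcal{N}(W^\T)$ satisfies $e^\T\xi=0$ (since $e^\T$ is the first row of $W^\T$), hence $J\xi=0$ and $(A+tJ)\xi=A\xi$. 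The paper's route buys slightly more: it shows that $A+tJ$ and $A$ actually \emph{coincide} as operators on $\mathcal{N}(W^\T)$, a fact reused in the proof of Lemma \ref{sameroots} (where $\restr{J}{\mathcal{N}(W^\T)}=0$ forces $\chi(\restr{(A+tJ)}{\mathcal{N}(W^\T)};x)$ to be independent of $t$). Your route is equally valid for the lemma as stated and leans on Lemma \ref{invt}, which the paper proves immediately beforehand anyway, so there is no circularity; your closing remark that no orthogonal-complement decomposition is needed is also accurate.
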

\begin{proof}
Let $\chi(A;x)=c_0+c_1x+\cdots+c_{n-1} x^{n-1}+x^{n}$ and $C$ be the companion matrix, that is,
\begin{equation}
C=\begin{pmatrix}
0&0&\cdots&0&-c_0\\
1&0&\cdots&0&-c_1\\
0&1&\cdots&0&-c_2\\
\vdots&\vdots&\ddots&\vdots&\vdots\\
0&0&\cdots&1&-c_{n-1}
\end{pmatrix}.
\end{equation}
It follows from the Cayley-Hamilton Theorem that $A^ne=-c_0e-c_1Ae-\cdots-c_{n-1}A^{n-1}e$ and hence $AW=WC$, or equivalently, $W^\T A=C^\T W^\T$ as $A$ is symmetric. Let $\xi$ be any vector in $\mathcal{N}(W^\T)$. Then we have $W^\T (A\xi)=C^\T W^\T \xi=0$ and hence $A\xi\in \mathcal{N}(W^\T)$. Moreover, as $e^\T$ is the first row of $W^\T$, we see that $e^\T \xi=0$ and hence $J \xi=0$. Thus,
$(A+tJ)\xi=A\xi\in \mathcal{N}(W^\T)$. This indicates that $\mathcal{N}(W^\T)$ is $(A+tJ)$-invariant, as desired.
\end{proof}

\begin{lemma}\label{mainchi} 	$
m_p(G;x)=\chi(\restr{A}{\mathcal{N}^\perp(W^\T)};x).
	$
\end{lemma}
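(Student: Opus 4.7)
The plan is to identify $\mathcal{N}^\perp(W^\T)$ with the column space $\mathcal{C}(W):=\Span\{e,Ae,\ldots,A^{n-1}e\}\subset\overline{\mathbb{F}}_p^n$ and then exploit the fact that the restriction of $A$ to $\mathcal{C}(W)$ is a \emph{cyclic} operator with cyclic generator $e$. Once this is done, the conclusion is the standard cyclic-subspace identity (characteristic polynomial $=$ minimal polynomial annihilating a cyclic vector), transcribed to characteristic $p$.

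First, I would show $\mathcal{N}^\perp(W^\T)=\mathcal{C}(W)$. By definition of orthogonality, $W^\T v=0$ if and only if $v$ is orthogonal to every column of $W$, hence $\mathcal{N}(W^\T)=\mathcal{C}(W)^\perp$. Taking orthogonals once more gives $\mathcal{N}^\perp(W^\T)=\mathcal{C}(W)^{\perp\perp}$. For any subspace $V\subset\overline{\mathbb{F}}_p^n$ the identity $V^{\perp\perp}=V$ holds by a bare dimension count: $\dim V^\perp=n-\dim V$ (already used in Section~2), so $\dim V^{\perp\perp}=\dim V$, while $V\subset V^{\perp\perp}$ is automatic. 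This is the only delicate point, since over $\overline{\mathbb{F}}_p^n$ orthogonal complements need not genuinely complement; but the double-orthogonal identity still holds.

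Next, $\mathcal{C}(W)$ is $A$-invariant: $A\cdot A^ie=A^{i+1}e\in\mathcal{C}(W)$ for $0\le i\le n-2$, while the coefficients of $\chi(A;x)\in\mathbb{F}_p[x]$ give via Cayley--Hamilton an $\mathbb{F}_p$-linear expression of $A^n e$ in terms of $e,Ae,\ldots,A^{n-1}e$ (exactly the argument used to prove Lemma~\ref{invs}). Set $r:=\rank_p W$. By Lemma~\ref{firstr}, $e,Ae,\ldots,A^{r-1}e$ form an $\overline{\mathbb{F}}_p$-basis of $\mathcal{C}(W)$, so there are unique $a_0,\ldots,a_{r-1}\in\mathbb{F}_p$ with
\begin{equation*}
A^r e=-a_0 e-a_1 Ae-\cdots-a_{r-1}A^{r-1}e.
\end{equation*}
Hence $f(x):=x^r+a_{r-1}x^{r-1}+\cdots+a_0\in\mathbb{F}_p[x]$ annihilates $e$, and no polynomial of lower degree does (since $e,Ae,\ldots,A^{r-1}e$ are linearly independent). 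Thus $m_p(G;x)=f(x)$ and $\deg m_p(G;x)=r$.

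Finally, writing $\restr{A}{\mathcal{C}(W)}$ in the basis $e,Ae,\ldots,A^{r-1}e$ produces exactly the companion matrix of $f$, whose characteristic polynomial is $f$ itself. Combining with the identification from the first step yields
\begin{equation*}
\chi(\restr{A}{\mathcal{N}^\perp(W^\T)};x)=\chi(\restr{A}{\mathcal{C}(W)};x)=f(x)=m_p(G;x),
\end{equation*}
which is the claim. I expect no genuine obstacle beyond the subtle double-orthogonal identification above; once $\mathcal{N}^\perp(W^\T)$ is recognized as the cyclic $A$-subspace generated by $e$, the equality of its characteristic polynomial with $m_p(G;x)$ is the elementary companion-matrix fact, valid over any field.
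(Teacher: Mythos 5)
Your proof is correct, and it takes a genuinely different route from the paper's. The paper never identifies $\mathcal{N}^\perp(W^\T)$ as a concrete subspace: it sets $f=\chi(\restr{A}{\mathcal{N}^\perp(W^\T)};x)$, computes $\deg f=\dim \mathcal{N}^\perp(W^\T)=\rank_p W=\deg m_p(G;x)$ (the last equality from Lemma \ref{firstr}), and then shows $f(A)e=0$ by applying the Cayley--Hamilton theorem to the restricted operator together with the observation $e\in\mathcal{N}^\perp(W^\T)$; for this it needs $\mathcal{N}^\perp(W^\T)$ to be $A$-invariant, which comes from Lemma \ref{invs} and the first assertion of Lemma \ref{facchi}, i.e., from the symmetry of $A$. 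You instead prove the sharper statement $\mathcal{N}^\perp(W^\T)=\Span\{e,Ae,\ldots,A^{n-1}e\}$ via $\mathcal{N}(W^\T)=(\Span\{e,Ae,\ldots,A^{n-1}e\})^\perp$ and the double-orthogonal identity $V^{\perp\perp}=V$, and you justify the latter correctly by the dimension count $\dim V^\perp=n-\dim V$, which is indeed the one delicate point over $\overline{\mathbb{F}}_p$ and survives the possible failure of $\overline{\mathbb{F}}_p^n=V\oplus V^\perp$. From there the companion-matrix form of the cyclic operator in the basis $e,Ae,\ldots,A^{r-1}e$ finishes the argument. What your route buys is transparency: the lemma becomes the statement that $\mathcal{N}^\perp(W^\T)$ is the Krylov space generated by $e$, whose $A$-invariance is then immediate without invoking the symmetry of $A$ or Lemma \ref{facchi} at this step; the paper's route is a little shorter because it reuses the orthogonality machinery of Section 2 and Cayley--Hamilton instead of identifying the space. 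The only point you leave implicit is the uniqueness of the monic degree-$r$ polynomial annihilating $e$ (the difference of two such would be a lower-degree annihilator, contradicting the independence of $e,Ae,\ldots,A^{r-1}e$); that is a one-line remark, not a gap.
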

\begin{proof}
	Let $r=\rank_p W$ and $f=\chi(\restr{A}{\mathcal{N}^\perp(W^\T)};x)$. Then $\deg f=\dim \mathcal{N}^\perp(W^\T)=r$. By Lemma \ref{firstr}, we see that $A^ke\in \Span\{e,Ae,\ldots,A^{k-1}e\}$ if and only if $k\ge r$. This implies that $\deg m_p(G;x)=r$. Thus, it suffices to show $f(A)e=0$. Indeed, by Cayley-Hamilton Theorem, we have $\restr{f(A)}{\mathcal{N}^\perp(W^\T)}$ is zero.  As $e\perp \xi$ for any $\xi\in \mathcal{N}(W^\T)$, we see that $e\in \mathcal{N}^\perp(W^\T)$. Therefore, $f(A)e=0$ and we are done.
\end{proof}
\begin{lemma}\label{sameroots}
	$\chi(\restr{A}{\mathcal{N}(W^\T)};x)$ divides $\Phi_p(G;x)$, and $\sfp (\Phi_p(G;x))$ divides $\chi(\restr{A}{\mathcal{N}(W^\T)};x)$.
\end{lemma}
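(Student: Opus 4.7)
The statement splits into two divisibilities, which I would handle separately.

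For the first divisibility $\chi(\restr{A}{\mathcal{N}(W^\T)};x)\mid \Phi_p(G;x)$, the plan is to apply Lemma \ref{facchi} twice with the invariant subspace $S:=\mathcal{N}(W^\T)$, once to the symmetric matrix $A$ and once to $A+J$. Both factorizations are available because $S$ is $(A+tJ)$-invariant for every $t$ by Lemma \ref{invs}. The key observation to exploit is that $J$ annihilates $S$: every $\xi\in S$ is orthogonal to $e$ since $e^\T$ is the first row of $W^\T$, so $J\xi=(e^\T\xi)e=0$ and hence $\restr{A+J}{S}=\restr{A}{S}$. This makes $\chi(\restr{A}{S};x)$ a common factor of $\chi(A;x)$ and $\chi(A+J;x)$, so it divides their gcd $\Phi_p(G;x)$.

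For the second divisibility I would combine the two factorizations into
\[
\Phi_p(G;x)\;=\;\chi(\restr{A}{S};x)\cdot \gcd\!\bigl(\chi(\restr{A}{S^\perp};x),\,\chi(\restr{A+J}{S^\perp};x)\bigr),
\]
so it suffices to show that every root $\lambda\in\overline{\mathbb{F}}_p$ of the gcd on the right is already a root of $\chi(\restr{A}{S};x)$. I would pick eigenvectors $u\in S^\perp\setminus\{0\}$ of $A$ and $v\in S^\perp\setminus\{0\}$ of $A+J$, both with eigenvalue $\lambda$. The guiding principle is that any $A$-eigenvector orthogonal to $e$ automatically lies in $S$: from $e^\T u=0$ and $Au=\lambda u$ one deduces $e^\T A^k u=\lambda^k(e^\T u)=0$ for every $k$, whence $W^\T u=0$. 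Thus if $e^\T u=0$ then $u\in S$ is a nonzero $\lambda$-eigenvector of $A$ in $S$; similarly, if $e^\T v=0$ then $(A+J)v=\lambda v$ forces $Av=\lambda v$ and the same calculation gives $v\in S$. In either subcase $\lambda$ is an eigenvalue of $\restr{A}{S}$, hence a root of $\chi(\restr{A}{S};x)$.

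The remaining case $e^\T u\neq 0$ and $e^\T v\neq 0$ is where I expect the main obstacle to lie, but I plan to rule it out by a self-adjointness argument. The identity $(A-\lambda I)v=-(e^\T v)e$ puts $e$ into $\Im(A-\lambda I)$, while $u$ lies in $\ker(A-\lambda I)$; since $A-\lambda I$ is symmetric, the bilinear identity $((A-\lambda I)w)^\T u=w^\T(A-\lambda I)u=0$ forces $e^\T u=0$, a contradiction. The only subtle point to verify is that the orthogonality $\Im(B)\subset \ker(B)^\perp$ for symmetric $B$ continues to hold over $\overline{\mathbb{F}}_p$—no positivity is used—so the argument transfers verbatim from the real case. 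Once this case is eliminated, $\lambda$ is a root of $\chi(\restr{A}{S};x)$, and $\sfp(\Phi_p(G;x))\mid \chi(\restr{A}{S};x)$ follows.
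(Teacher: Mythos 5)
Your proof is correct and follows essentially the same route as the paper's: the first divisibility rests on the observation that $J$ annihilates $\mathcal{N}(W^\T)$ so that $\restr{(A+J)}{\mathcal{N}(W^\T)}=\restr{A}{\mathcal{N}(W^\T)}$, and the second on the symmetric-bilinear-pairing argument forcing $(e^\T u)(e^\T v)=0$ together with the fact that an eigenvector orthogonal to $e$ lies in $\mathcal{N}(W^\T)$ because $e^\T A^k u=\lambda^k e^\T u$. The only cosmetic differences are that you invoke Lemma \ref{facchi} and the identity $\gcd(ab,ac)=a\gcd(b,c)$ to localize the eigenvectors to $\mathcal{N}(W^\T)^\perp$, whereas the paper takes arbitrary common eigenvectors of $A$ and $A+J$ and disposes of the case $e^\T v=0$ via Lemma \ref{invt}.
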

\begin{proof}
By Lemma \ref{invs},  the space $\mathcal{N}(W^\T)$ is $(A+tJ)$-invariant for any $t\in \overline{\mathbb{F}}_p$.  Let $f_t\in \overline{\mathbb{F}}_p[x]$ denote $\chi(\restr{(A+tJ)}{\mathcal{N}(W^\T)};x)$. Since $\restr{J}{\mathcal{N}(W^\T)}$ is zero, we find that $f_t$ does not depend on $t$. Clearly $f_t\mid \chi(A+tJ;x)$. Since $f_0=f_1$, we have  $f_0\mid \gcd(\chi(A;x),\chi(A+J;x))$, which is exactly the first assertion.

To prove the second assertion, it suffices to show that every root of $\Phi_p(G;x)$ is a root of $f_0$ (or $f_1$). Let $\lambda\in \overline{\mathbb{F}}_p$ be any root of $\Phi_p(G;x)$, that is, $\lambda$ is a common eigenvalue of $A$ and $A+J$. Then there exist two nonzero vectors $\xi$ and $\eta$ such that $A\xi=\lambda \xi$ and $(A+J)\eta=\lambda \eta$. We claim that  either $e^\T \xi=0$ or $e^\T \eta=0$.  Actually, we have
\begin{equation}
\xi^\T (\lambda I-A)\eta=\xi^\T J\eta=\xi^\T ee^\T\eta=(e^\T\xi)(e^\T \eta).
\end{equation}
Taking transpose and noting that  $A$ is symmetric, we  have $\xi^\T (\lambda I-A)\eta=\eta^\T (\lambda I-A)\xi=0$. Thus $(e^\T\xi)(e^\T \eta)=0$ and the claim follows. Suppose that $e^\T\xi=0$. Then $e^\T A^k\xi=e^\T \lambda^k \xi=0$ for any positive $k$ and hence $W^\T \xi=0$, i.e., $\xi\in \mathcal{N}(W^\T)$. Since $\xi$ is an eigenvector of $\restr{A}{\mathcal{N}(W^\T)}$, the corresponding eigenvalue $\lambda$ must be a root of $f_0$. Now suppose that  $e^\T\eta=0$. Similarly we have $\eta\in \mathcal{N}(W_1^\T)$. But $\mathcal{N}(W_1^\T)=\mathcal{N}(W^\T)$ by Lemma \ref{invt}. Thus, $\eta\in \mathcal{N}(W^\T)$ and we see that $\lambda$ must be a root of $f_1$. Recall that $f_0=f_1$. We find that $\lambda$ is always a root of $f_0$. This completes the proof.
\end{proof}
\begin{lemma}\label{basicinq}
$\deg\sfp(\Phi_p(G;x))\le \nullity_p W\le \deg\Phi_p(G;x)$. Moreover, if the first equality holds then
	\begin{equation}\label{formp}
	m_p(G;x)=\frac{\chi(A;x)}{\sfp(\Phi_p(G;x))}.
	\end{equation}
\end{lemma}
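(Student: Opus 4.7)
The plan is to assemble this statement from the divisibilities in Lemma \ref{sameroots} together with the factorization in Lemma \ref{facchi}. Let me write $g(x) = \chi(\restr{A}{\mathcal{N}(W^\T)};x)$ for brevity; since the characteristic polynomial of a restriction has degree equal to the dimension of the invariant subspace, $\deg g = \dim \mathcal{N}(W^\T) = \nullity_p W$. Lemma \ref{sameroots} gives the chain of divisibilities $\sfp(\Phi_p(G;x)) \mid g \mid \Phi_p(G;x)$, and taking degrees immediately yields $\deg \sfp(\Phi_p(G;x)) \le \nullity_p W \le \deg \Phi_p(G;x)$, which is the first assertion.

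For the moreover clause, assume the first equality $\deg \sfp(\Phi_p(G;x)) = \nullity_p W = \deg g$. Since $\sfp(\Phi_p(G;x))$ and $g$ are both monic, the divisibility $\sfp(\Phi_p(G;x)) \mid g$ combined with equality of degrees forces $g = \sfp(\Phi_p(G;x))$. Next I would invoke Lemma \ref{facchi} with the $A$-invariant subspace $U = \mathcal{N}(W^\T)$ (whose invariance is the $t=0$ case of Lemma \ref{invs}) to factor
\begin{equation}
\chi(A;x) = \chi(\restr{A}{\mathcal{N}(W^\T)};x)\,\chi(\restr{A}{\mathcal{N}^\perp(W^\T)};x) = g(x)\, \chi(\restr{A}{\mathcal{N}^\perp(W^\T)};x).
\end{equation}
By Lemma \ref{mainchi} the second factor is exactly $m_p(G;x)$, so $\chi(A;x) = \sfp(\Phi_p(G;x))\, m_p(G;x)$, which rearranges to the desired formula \eqref{formp}.

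There is essentially no genuine obstacle here beyond careful bookkeeping: the inequalities are pure degree counts applied to Lemma \ref{sameroots}, and the equality case uses nothing beyond the factorization lemma and the identification of $m_p$ as the characteristic polynomial on $\mathcal{N}^\perp(W^\T)$. The one subtle point worth flagging is that the factorization $\chi(A;x) = \chi(\restr{A}{U};x)\chi(\restr{A}{U^\perp};x)$ is precisely where we are forced to use the nontrivial Lemma \ref{facchi} developed in Section 2 rather than the usual $\mathbb{R}$-space argument, since over $\overline{\mathbb{F}}_p$ we need not have $\overline{\mathbb{F}}_p^n = U \oplus U^\perp$ when $U$ is isotropic.
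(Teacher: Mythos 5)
Your proposal is correct and follows essentially the same route as the paper: both derive the degree inequalities from the divisibility chain of Lemma \ref{sameroots}, and both obtain \eqref{formp} by showing the equality case forces $\chi(\restr{A}{\mathcal{N}(W^\T)};x)=\sfp(\Phi_p(G;x))$ and then combining Lemma \ref{facchi} with the identification $m_p(G;x)=\chi(\restr{A}{\mathcal{N}^\perp(W^\T)};x)$ from Lemma \ref{mainchi}. The only cosmetic difference is that the paper packages this as a single chain of (in)equalities on degrees, whereas you argue the polynomial identity directly from monicity; the content is the same.
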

\begin{proof}
	Note that $\deg\chi(\restr{A}{\mathcal{N}(W^\T)};x)=\dim \mathcal{N}(W^\T)=\nullity_p W$. The first assertion clearly follows from Lemma \ref{sameroots}.  Note that $\deg m_p(G;x)=\rank_p W=n-\nullity_p W$. It follows from Lemmas \ref{mainchi}, \ref{facchi} and \ref{sameroots} that
		\begin{eqnarray}\label{twoinq}
		n-\nullity_p W &=&\deg m_p(G;x)\nonumber\\
		&=&\deg \chi(\restr{A}{\mathcal{N}^\perp(W^\T)};x)\nonumber\\ \nonumber
		&= &\deg\frac{\chi(A;x)}{\chi(\restr{A}{\mathcal{N}(W^\T)};x)}\nonumber\\
		&\le&\deg\frac{\chi(A;x)}{\sfp(\Phi_p(G;x))}\\
		&=&n-\deg\sfp(\Phi_p(G;x)).\nonumber
	\end{eqnarray}
Suppose that  $\deg\sfp(\Phi_p(G;x))= \nullity_p W$.  Then the inequality in \eqref{twoinq}  must become an equality. Clearly, this happens precisely when $ \chi(\restr{A}{\mathcal{N}(W^\T)};x)=\sfp(\Phi_p(G;x))$. Thus, \eqref{formp} holds and the proof is complete.
\end{proof}
\begin{corollary}\label{np1}
If $\nullity_p W=1$ then $\deg\sfp(\Phi_p(G;x))=1$.
\end{corollary}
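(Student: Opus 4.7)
The plan is to observe that Corollary \ref{np1} is essentially immediate from the double inequality established in Lemma \ref{basicinq}, namely
\[
\deg\sfp(\Phi_p(G;x))\le \nullity_p W\le \deg\Phi_p(G;x).
\]
Setting $\nullity_p W = 1$, the upper bound gives $\deg\sfp(\Phi_p(G;x))\le 1$, while the lower bound gives $\deg\Phi_p(G;x)\ge 1$.

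The only thing that needs to be ruled out is the possibility $\deg\sfp(\Phi_p(G;x))=0$. But the square-free part of a polynomial $f\in \mathbb{F}_p[x]$ is, by definition, the product of its distinct monic irreducible factors, so $\sfp(f)$ is constant (equal to $1$) if and only if $f$ is itself a nonzero constant. Since $\deg\Phi_p(G;x)\ge 1$, the polynomial $\Phi_p(G;x)$ is nonconstant, so $\sfp(\Phi_p(G;x))$ has at least one irreducible factor and hence $\deg\sfp(\Phi_p(G;x))\ge 1$.

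Combining the two bounds yields $\deg\sfp(\Phi_p(G;x))=1$, which is exactly the claim. There is no real obstacle here; the corollary is a direct bookkeeping consequence of Lemma \ref{basicinq}, and the only point worth spelling out is the elementary observation that a nonconstant polynomial over $\mathbb{F}_p$ has a nonconstant square-free part.
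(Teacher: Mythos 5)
Your argument is correct and follows the paper's own proof essentially verbatim: both deduce $\deg\sfp(\Phi_p(G;x))\le 1$ and $\deg\Phi_p(G;x)\ge 1$ from Lemma \ref{basicinq} and then rule out the degree-zero case (the paper does this by noting $\Phi_p(G;x)=(x-\lambda)^k$, you by noting that a nonconstant polynomial has a nonconstant square-free part, which is the same observation). Nothing is missing.
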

\begin{proof}
	As  $\nullity_p W=1$, Lemma \ref{basicinq} implies that $\deg\sfp(\Phi_p(G;x))\le 1$ and $\deg\Phi_p(G;x)\ge 1$. Now clearly, $\Phi_p(G;x)$ has the form $(x-\lambda)^k$ for some $\lambda\in \overline{\mathbb{F}}_p$ (indeed $\lambda\in \mathbb{F}_p$) and positive integer $k$. Thus, $\sfp(\Phi_p(G;x))=x-\lambda$ and the corollary follows.
\end{proof}
\begin{corollary}\label{samepmain}
Let $G\in \mathcal{G}_n$ and $d_n$ be the last invariant factor of $W(G)$. Suppose that $d_n$ is square-free and $p$ is an odd prime factor of $d_n$. If $
\deg\sfp(\Phi_p(G;x))= \nullity_p W(G)$,
then $\nullity_p W(G)=\nullity_p W(H)$ and  $m_p(G;x)=m_p(H;x)$  for any graph $H$ generalized cospectral with $G$.	
\end{corollary}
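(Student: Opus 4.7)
The plan is to first establish $\nullity_p W(G)=\nullity_p W(H)$ and then read off the main polynomial equality from the \emph{moreover} clause of Lemma \ref{basicinq}. Three ingredients will drive this: (i) the generalized cospectral invariance of $\Phi_p$ from Remark \ref{invgc}, which gives $\sfp(\Phi_p(G;x))=\sfp(\Phi_p(H;x))$; (ii) the double inequality of Lemma \ref{basicinq} applied to $H$; and (iii) the identity $|\det W(G)|=|\det W(H)|$, which follows at once from Lemma \ref{gcQ}, since $W(H)=Q^\T W(G)$ for an orthogonal (hence $\det=\pm 1$) rational matrix $Q$.

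One direction of the nullity equality is immediate: combining (i), (ii), and the hypothesis on $G$ yields
\begin{equation*}
\nullity_p W(G)=\deg\sfp(\Phi_p(G;x))=\deg\sfp(\Phi_p(H;x))\le \nullity_p W(H).
\end{equation*}

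The reverse inequality is the crux and I would attack it by a $p$-adic valuation argument on the determinant. Since every invariant factor $d_i$ of $W(G)$ divides the square-free integer $d_n$, the valuation $v_p(d_i)\in\{0,1\}$, which forces $v_p(\det W(G))$ to equal exactly the count of $d_i$ divisible by $p$, namely $\nullity_p W(G)$. For $W(H)$, with (possibly non-square-free) invariant factors $d_i^H$, I only have the trivial bound $v_p(\det W(H))=\sum_i v_p(d_i^H)\ge \nullity_p W(H)$. Plugging into (iii) gives
\begin{equation*}
\nullity_p W(H)\le v_p(\det W(H))=v_p(\det W(G))=\nullity_p W(G),
\end{equation*}
which, together with the previous paragraph, completes the nullity equality.

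Once the nullities agree, $H$ itself satisfies the hypothesis of the \emph{moreover} clause of Lemma \ref{basicinq}, so $m_p(H;x)=\chi(A(H);x)/\sfp(\Phi_p(H;x))$, and analogously for $G$. Since the numerators coincide by cospectrality and the denominators coincide by (i), the two main polynomials match. The subtle step I expect to need care is the determinant argument: a naive attempt would try to transport square-freeness of $d_n$ directly to $W(H)$, but the efficient workaround is to exploit the \emph{asymmetric} bounds for $v_p(\det W)$, invoking square-freeness only on the $G$-side, where it is actually assumed.
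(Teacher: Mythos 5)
Your proposal is correct and follows essentially the same route as the paper: both establish $\nullity_p W(H)\ge\nullity_p W(G)$ via the invariance of $\Phi_p$ and Lemma \ref{basicinq}, obtain the reverse inequality from $p^k\mid\mid\det W(G)=\pm\det W(H)$ (your $p$-adic valuation phrasing is just a restatement of the paper's exact-divisibility argument, with square-freeness of $d_n$ invoked only on the $G$-side), and then apply the \emph{moreover} clause of Lemma \ref{basicinq} to both graphs to equate the $p$-main polynomials.
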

\begin{proof}
 Write $k=\nullity_p W(G)$. Then exactly the last $k$ invariant factors $d_{n-k+1},\ldots,d_n$ of $W(G)$ are multiple of $p$. Since $p\mid\mid d_n$ and $d_{n-k+1}\mid d_{n-k+2}\mid\cdots\mid d_n$, all these invariant factors must have $p$ as a \emph{simple} factor. Thus $p^k\mid\mid\det W(G)$ and hence $p^k\mid\mid\det W(H)$ as $\det W(G)=\pm \det W(H)$. Consequently, we have $\nullity_p W(H)\le k$.  On the other hand, noting that $\Phi_p(G;x)=\Phi_p(H;x)$,   Lemma \ref{basicinq} together with the condition of this proposition implies
		$$\nullity_p W(H)\ge \deg\sfp(\Phi_p(H;x))=\deg\sfp(\Phi_p(G;x))= \nullity_p W(G)=k.$$
Therefore, we have  $\nullity_p W(H)=k$.		Now, using the second part of Lemma \ref{basicinq} for both $G$ and $H$, we find that $m_p(G;x)=m_p(H;x)$.
\end{proof}
The following corollary is not needed for the proof of Theorem \ref{main} but will be used to give a better understanding of the counterexample given in the next section.
\begin{corollary}\label{samepmain2}
Let $G\in \mathcal{G}_n$ and $d_n$ be the last invariant factor of $W(G)$. Suppose that $d_n$ is square-free and $p$ is an odd prime factor of $d_n$. If $\nullity_p W(G)=2$ then, for any graph $H$ generalized cospectral with $G$, one of the following two statements holds.\\
(\rmnum{1})  $\nullity_p W(H)=2$ and $m_p(G;x)=m_p(H;x)$;\\
(\rmnum{2)}  $\nullity_p W(H)=1$ and $m_p(G;x)\neq m_p(H;x)$.
\end{corollary}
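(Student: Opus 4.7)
The plan is to split the argument according to the value of $\nullity_p W(H)$ by mimicking the divisibility bookkeeping from the proof of Corollary \ref{samepmain}. Since $d_n$ is square-free and $\nullity_p W(G)=2$, the last two invariant factors of $W(G)$ are the only ones divisible by $p$, and each to the first power, so $p^2\mid\mid\det W(G)$. Because $\det W(G)=\pm\det W(H)$ (via Lemma \ref{gcQ}), the same relation holds for $W(H)$, and the structure of the Smith normal form then confines $\nullity_p W(H)$ to $\{1,2\}$.

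Case (ii) requires essentially no work: when $\nullity_p W(H)=1$ the polynomials $m_p(G;x)$ and $m_p(H;x)$ have degrees $n-2$ and $n-1$ respectively, so they are automatically different.

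For case (i), when $\nullity_p W(H)=2$, the key reduction comes from Lemmas \ref{mainchi} and \ref{facchi}: setting $f_G=\chi(\restr{A(G)}{\mathcal{N}(W^\T(G))};x)$ and $f_H=\chi(\restr{A(H)}{\mathcal{N}(W^\T(H))};x)$, one has $m_p(G;x)=\chi(A(G);x)/f_G$ and $m_p(H;x)=\chi(A(H);x)/f_H$. Since $G$ and $H$ are cospectral in particular, it suffices to prove $f_G=f_H$. Both $f_G$ and $f_H$ are monic of degree $2$, and by Lemma \ref{sameroots} both are divisible by $\sfp(\Phi_p)$ and divide $\Phi_p$, where $\Phi_p:=\Phi_p(G;x)=\Phi_p(H;x)$ is invariant under generalized cospectrality. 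Lemma \ref{basicinq} then gives $\deg\sfp(\Phi_p)\in\{1,2\}$, the positivity coming from $\deg\Phi_p\ge\nullity_p W(G)=2$. If $\deg\sfp(\Phi_p)=2$, a degree count forces $f_G=f_H=\sfp(\Phi_p)$. If $\deg\sfp(\Phi_p)=1$, say $\sfp(\Phi_p)=x-\lambda$, then $\Phi_p=(x-\lambda)^k$ for some $k\ge 2$, and any monic degree-$2$ divisor of $\Phi_p$ that is divisible by $x-\lambda$ must equal $(x-\lambda)^2$, so $f_G=f_H=(x-\lambda)^2$.

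The overall scheme is structurally routine once the machinery of Section~2 and the bounds of Lemma \ref{basicinq} are in hand; the only mildly delicate point is excluding the possibility $\sfp(\Phi_p)=1$, which follows immediately from the lower bound $\deg\Phi_p\ge\nullity_p W(G)=2$. Everything else is a small case check in degree~$2$.
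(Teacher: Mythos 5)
Your proof is correct and follows essentially the same route as the paper's: the same $p^2\mid\mid\det W$ bookkeeping to confine $\nullity_p W(H)$ to $\{1,2\}$, the degree argument for case (\rmnum{2}), and the sandwich $\sfp(\Phi_p)\mid\chi(\restr{A}{\mathcal{N}(W^\T)};x)\mid\Phi_p$ from Lemma \ref{sameroots} combined with Lemmas \ref{mainchi} and \ref{facchi} to pin down $m_p$ in case (\rmnum{1}). The only difference is organizational: you split first on $\nullity_p W(H)$ and handle the subcase $\deg\sfp(\Phi_p)=2$ directly by a degree count, where the paper splits first on $\deg\sfp(\Phi_p)$ and delegates that subcase to Corollary \ref{samepmain}; the substance is identical.
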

\begin{proof}
	By Lemma \ref{basicinq}, we have $\deg\sfp(\Phi_p(G;x))\le 2\le \deg\Phi_p(G;x)$. Thus, we have  $\deg\sfp(\Phi_p(G;x))=2$ or $1$. If $\deg\sfp(\Phi_p(G;x))=2$, then (\rmnum{1}) holds by Corollary \ref{samepmain}. Now assume that $\deg\sfp(\Phi_p(G;x))=1$. Then, using a similar argument as in the proof of Corollary \ref{samepmain}, we have $p^2\mid\mid \det W(H)$ and hence $\nullity_p W(H)=1$ or $2$. If $\nullity_p W(H)=1$ then the two polynomials $m_p(G;x)$ and $m_p(H;x)$ have different degrees and of course $m_p(G;x)\neq m_p(H;x)$. It remains to consider the case that $\nullity_p W(H)=2$.
	
	Since $\deg\sfp(\Phi_p(G;x))=1$ and $\deg \Phi_p(G;x)\ge 2$, we have $\Phi_p(G;x)=(x-\lambda)^k$ for some $\lambda\in\mathbb{F}_p$ and integer $k\ge 2$. By Lemma \ref{sameroots}, we see that $\chi(\restr{A}{\mathcal{N}(W^\T(G))};x)$ is a factor of  $\Phi_p(G;x)$. As $\deg \chi(\restr{A}{\mathcal{N}(W^\T(G))};x)=\nullity_p W(G)=2$, we must have $\chi(\restr{A}{\mathcal{N}(W^\T(G))};x)=(x-\lambda)^2$. Thus, by Lemmas \ref{mainchi} and \ref{facchi}, we have
	$m_p(G;x) =\frac{\chi(A(G);x)}{(x-\lambda)^2}$. Since $\nullity_p W(H)=2$, the same argument also works for $H$. Noting that $\chi(A(H);x)=\chi(A(G);x)$ and $\Phi_p(H;x)=\Phi_p(G;x)$, we see that 	$m_p(G;x)=m_p(H;x)$. This completes the proof.
\end{proof}
\begin{proposition}\label{almostmain}
Let $Q\in\mathcal{Q}(G)$ with level $\ell$. If $p\mid\mid d_n$ and	$\deg\sfp(\Phi_p(G;x))=\nullity_p W$ then $p\nmid\ell$.
\end{proposition}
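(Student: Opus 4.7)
My plan is to argue by contradiction: assume $p\mid\ell$, and derive $N\equiv 0\pmod p$ for $N=\ell Q$, which contradicts the minimality of $\ell$. Let $H$ be the graph with $A(H)=Q^\T A(G)Q$, generalized cospectral with $G$ by Lemma~\ref{gcQ}. The identities $QQ^\T=I$, $Qe=e$, $A(G)Q=QA(H)$, and $Q^\T W(G)=W(H)$, multiplied by appropriate powers of $\ell$ and reduced modulo $p$, yield
\begin{equation*}
NN^\T\equiv N^\T N\equiv 0,\ Ne\equiv N^\T e\equiv 0,\ A(G)N\equiv NA(H),\ W(G)^\T N\equiv 0\equiv W(H)^\T N^\T\pmod p,
\end{equation*}
so $\Im(N)\subseteq V_G:=\mathcal{N}(W(G)^\T)$ and $\Im(N^\T)\subseteq V_H:=\mathcal{N}(W(H)^\T)$ over $\overline{\mathbb{F}}_p$, both $A$-invariant (under the corresponding $A$) and totally isotropic.

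Invoking Corollary~\ref{samepmain} and the equality case of Lemma~\ref{basicinq}, the hypothesis gives $\nullity_p W(G)=\nullity_p W(H)=:k$, $m_p(G;x)=m_p(H;x)$, and $\chi(\restr{A(G)}{V_G};x)=\chi(\restr{A(H)}{V_H};x)=\sfp(\Phi_p(G;x))$, which is square-free. Hence $\restr{A(G)}{V_G}$ and $\restr{A(H)}{V_H}$ are diagonalizable over $\overline{\mathbb{F}}_p$ with the same $k$ simple eigenvalues $\lambda_1,\dots,\lambda_k$ and $1$-dimensional eigenspaces $E_{G,i}=\Span\{\xi_{G,i}\}$ and $E_{H,i}=\Span\{\xi_{H,i}\}$. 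The intertwining $A(G)N\equiv NA(H)\pmod p$ sends $E_{H,i}$ into $E_{G,i}$, giving $N\xi_{H,i}=c_i\xi_{G,i}$ and $N^\T\xi_{G,i}=c_i'\xi_{H,i}$ for some $c_i,c_i'\in\overline{\mathbb{F}}_p$. Applying $N^\T$ to the first relation and using $N^\T N\equiv 0$ yields $c_ic_i'=0$ for every $i$.

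The main obstacle is to exclude every nontrivial configuration $(c_1,\dots,c_k)$: by the total isotropy of $\Im(N)$ together with the orthogonality $E_{G,i}\perp E_{G,j}$ for $i\neq j$ (Claim~1 in the proof of Lemma~\ref{facchi}), the condition $c_i\neq 0$ forces $\xi_{G,i}^\T\xi_{G,i}=0$; symmetrically, $c_i'\neq 0$ forces $\xi_{H,i}^\T\xi_{H,i}=0$. I expect to rule out such self-isotropic configurations by combining the Smith-normal-form hypothesis $p\mid\mid d_n$---which via Lemma~\ref{pdn} and the consequence $p^k\mid\mid\det W(G)$ tightly controls the $p$-adic arithmetic of the problem---with the factorization $\chi(A(G);x)=\chi(\restr{A(G)}{\Im(N)};x)\cdot\chi(\restr{A(G)}{\Im(N)^\perp};x)$ supplied by Lemma~\ref{facchi} applied to the $A$-invariant subspace $\Im(N)\subseteq V_G$. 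A careful accounting of how these square-free and $m_p$-pieces fit into $\chi(A;x)$ should force $\Im(N)=0$, hence $N\equiv 0\pmod p$ (after a short anisotropy-style argument handling the complement of $V_H^\perp\subseteq\ker N$), yielding the contradiction and therefore $p\nmid\ell$.
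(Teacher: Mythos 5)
Your reduction modulo $p$ of the identities for $N=\ell Q$, the containments $\Im(N)\subseteq\mathcal{N}(W(G)^\T)$ and $\Im(N^\T)\subseteq\mathcal{N}(W(H)^\T)$, and the use of Corollary~\ref{samepmain} to get matching diagonalizable restrictions with simple eigenvalues are all correct, and the intertwining relation does give $N\xi_{H,i}=c_i\xi_{G,i}$ with $c_ic_i'=0$. But the proof stops exactly where the real work begins: you never actually exclude the configurations with some $c_i\neq 0$. The only constraint you extract is that $c_i\neq 0$ forces $\xi_{G,i}^\T\xi_{G,i}=0$, and over $\overline{\mathbb{F}}_p$ an eigenvector can perfectly well be self-isotropic; the anisotropy statement (Claim~2 in the proof of Lemma~\ref{facchi}) applies to the full generalized eigenspace of $A$ for $\lambda_i$ inside $\overline{\mathbb{F}}_p^n$, which is generally larger than the one-dimensional space $E_{G,i}\subseteq\mathcal{N}(W(G)^\T)$, so it does not rule this out. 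The phrases ``I expect to rule out'' and ``should force $\Im(N)=0$'' are placeholders for the essential step. Worse, the obstruction is genuinely invisible to mod-$p$ linear algebra: the hypothesis $p\mid\mid d_n$ enters through the $p$-adic valuation $p^k\mid\mid\det W$, which is not detectable from the reduction of $Q$ modulo $p$ alone, so any completion of your argument would have to lift to $\mathbb{Z}/p^2\mathbb{Z}$ or $\mathbb{Z}_p$ --- something you never do.

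The paper's proof uses the same key input (Corollary~\ref{samepmain}, giving $f(A)e\equiv f(A')e\equiv 0\pmod p$ for an integral lift $f$ of $m_p(G;x)$) but then finishes with a short arithmetic argument instead of eigenvector analysis: it replaces the last $k=\nullity_p W$ columns of $W$ and $W'$ by $\frac{1}{p}A^if(A)e$ and $\frac{1}{p}A'^if(A')e$ to obtain integral matrices $\overline{W}$, $\overline{W'}$ with $Q^\T\overline{W}=\overline{W'}$, whence $\ell\mid\det\overline{W}=p^{-k}\det W$, and $p\nmid p^{-k}\det W$ because $p^k\mid\mid\det W$. That determinant computation is precisely the $p$-adic lifting your approach is missing. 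I recommend either adopting that finishing move or, if you want to keep the structural route, explaining concretely how the valuation condition $p^k\mid\mid\det W$ is to be converted into a statement about the vectors $\xi_{G,i}$; as written, the proposal does not constitute a proof.
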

	\begin{proof}
Let $A=A(G)$ and $A'=Q^\T A Q$.  Let $f(x)\in \mathbb{Z}[x]$ be a monic polynomial such that $f(x)\equiv m_p(G;x)\pmod p$. By Corollary \ref{samepmain}, we have $f(A)e\equiv f(A')e\equiv 0\pmod{p}$. Write $k=\nullity_p W$. Note that $\deg f(x)=n-k$.  Define
\begin{equation}	\overline{W}=\left[e,Ae,\ldots,A^{n-k-1}e,\frac{1}{p}f(A)e,\frac{1}{p}Af(A)e,\ldots,\frac{1}{p}A^{k-1}f(A)e\right]
\end{equation}
and
\begin{equation}	\overline{W'}=\left[e,A'e,\ldots,A'^{n-k-1}e,\frac{1}{p}f(A')e,\frac{1}{p}A'f(A')e,\ldots,\frac{1}{p}A'^{k-1}f(A')e\right].
\end{equation}
Then both $\overline{W}$ and  $\overline{W'}$ are integral matrices and we still have $Q^\T \overline{W} =\overline{W'}$. This indicates that $\ell(Q^\T)\mid \det \overline{W}$, or equivalently,  $\ell\mid \det \overline{W}$. On the other hand,  as $p^{k}\mid\mid \det W$ and  $\det \overline{W}=p^{-k}\det W$, we see that  $p\nmid \det \overline{W}$.  Thus, $p\nmid \ell$, as desired.
\end{proof}
Now, we are in a position to present the proof of Theorem \ref{main}.
\begin{proof}[Proof of Theorem \ref{main}]The case that $n=1$ is trivial and hence we assume $n\ge 2$.  Let $Q$ be any matrix in $\mathcal{Q}(G)$ and $\ell$ be its level.  Noting that $n\ge 2$, Lemma \ref{tdw} implies that $\det W$ and hence $d_n$ is even. Since $d_n$ is square-free, we see that $d_n\equiv 2\pmod{4}$. It follows from Corollary \ref{oddlevel} that $\ell$ is odd. In order to show $\ell=1$, we need to show that $\ell$ has no odd prime factor. Suppose to the contrary that there is an odd prime $p$ such that $p\mid \ell$. By Lemma \ref{pdn}, we know that $\ell\mid d_n$ and hence $p\mid d_n$. Moreover, as $d_n$ is square-free, we must have $p\mid\mid d_n$. Now, by Proposition \ref{almostmain}, we have $p\nmid \ell$. This is a contradiction. Therefore, $\ell=1$ and $G$ is DGS by Lemma \ref{onelevel}. This completes the proof.	
	\end{proof}
\section{Discussions}\label{dissec}
We first give an example  to illustrate that Theorem \ref{main} does improve upon Theorem \ref{sqf}. We use Mathematica for the  computation.
\begin{example}\normalfont
	Let $n=16$ and $G$ be the graph with adjacency matrix
$$ A=\scriptsize{\left(
\begin{array}{cccccccccccccccc}
0 & 0 & 1 & 0 & 0 & 1 & 0 & 1 & 1 & 0 & 1 & 0 & 0 & 0 & 0 & 0 \\
0 & 0 & 0 & 1 & 0 & 1 & 1 & 1 & 1 & 0 & 1 & 0 & 0 & 1 & 0 & 1 \\
1 & 0 & 0 & 1 & 1 & 0 & 1 & 1 & 1 & 0 & 0 & 0 & 0 & 1 & 0 & 1 \\
0 & 1 & 1 & 0 & 1 & 0 & 1 & 0 & 0 & 0 & 1 & 0 & 0 & 1 & 0 & 1 \\
0 & 0 & 1 & 1 & 0 & 0 & 0 & 1 & 1 & 0 & 0 & 1 & 1 & 1 & 1 & 1 \\
1 & 1 & 0 & 0 & 0 & 0 & 1 & 1 & 1 & 1 & 0 & 0 & 0 & 0 & 1 & 1 \\
0 & 1 & 1 & 1 & 0 & 1 & 0 & 0 & 1 & 0 & 1 & 0 & 0 & 1 & 1 & 1 \\
1 & 1 & 1 & 0 & 1 & 1 & 0 & 0 & 1 & 1 & 0 & 1 & 1 & 0 & 0 & 1 \\
1 & 1 & 1 & 0 & 1 & 1 & 1 & 1 & 0 & 0 & 1 & 0 & 0 & 1 & 1 & 1 \\
0 & 0 & 0 & 0 & 0 & 1 & 0 & 1 & 0 & 0 & 0 & 1 & 0 & 1 & 1 & 1 \\
1 & 1 & 0 & 1 & 0 & 0 & 1 & 0 & 1 & 0 & 0 & 0 & 0 & 0 & 0 & 1 \\
0 & 0 & 0 & 0 & 1 & 0 & 0 & 1 & 0 & 1 & 0 & 0 & 1 & 1 & 1 & 0 \\
0 & 0 & 0 & 0 & 1 & 0 & 0 & 1 & 0 & 0 & 0 & 1 & 0 & 0 & 0 & 0 \\
0 & 1 & 1 & 1 & 1 & 0 & 1 & 0 & 1 & 1 & 0 & 1 & 0 & 0 & 1 & 1 \\
0 & 0 & 0 & 0 & 1 & 1 & 1 & 0 & 1 & 1 & 0 & 1 & 0 & 1 & 0 & 1 \\
0 & 1 & 1 & 1 & 1 & 1 & 1 & 1 & 1 & 1 & 1 & 0 & 0 & 1 & 1 & 0 \\
\end{array}
\right)}.
$$
The Smith normal form of $W(G)$ is
	$$\diag[\underbrace{1,1,1,1,1,1,1,1}_{8},\underbrace{2,2,2,2,2,2,2\times3,2b}_{8}],$$
	where $b=3\times 23\times 29\times 1225550789\times6442787651$, which is square-free. From the Smith normal form, we see that Theorem \ref{sqf} is not applicable here. We turn to Theorem \ref{main}. Consider $p=3$. Then $\Phi_p(G;x)=x^4+2 x^3+2 x^2+x+1$, which has the standard factorization $\Phi_p(G;x)=\left(x^2+x+2\right)^2$ over $\mathbb{F}_p$. Thus,
	$\sfp~\Phi_p(G;x)=x^2+x+2$. As $\nullity_p W= 2$, we see that \eqref{keyequ} holds in this case. Moreover, by Corollary \ref{np1}, all other odd prime factors of $b$ (or $2b$) must satisfy $\eqref{keyequ}$. Thus $G$ is DGS by Theorem \ref{main}.
\end{example}
Our next example indicates that if \eqref{keyequ} is not satisfied, then $G$ may not be DGS.
\begin{example}\label{ex2}\normalfont
Let $n=9$ and $G$ be the graph with adjacency matrix

$$ A=\scriptsize{\left(
	\begin{array}{ccccccccc}
	0 & 1 & 0 & 1 & 0 & 0 & 1 & 1 & 1 \\
	1 & 0 & 1 & 0 & 1 & 0 & 0 & 1 & 1 \\
	0 & 1 & 0 & 1 & 1 & 1 & 0 & 1 & 1 \\
	1 & 0 & 1 & 0 & 1 & 0 & 0 & 0 & 0 \\
	0 & 1 & 1 & 1 & 0 & 1 & 1 & 1 & 0 \\
	0 & 0 & 1 & 0 & 1 & 0 & 1 & 0 & 0 \\
	1 & 0 & 0 & 0 & 1 & 1 & 0 & 1 & 1 \\
	1 & 1 & 1 & 0 & 1 & 0 & 1 & 0 & 1 \\
	1 & 1 & 1 & 0 & 0 & 0 & 1 & 1 & 0 \\
	\end{array}
	\right)}.
$$
The Smith normal form of $W$ is
$$\diag[1,1,1,1,1,2,2,2\times 3\times 5,2\times 3\times 5].$$
Now we see $\nullity_3 W=\nullity_5 W=2$.
Direct computation (using Mathematica) indicates that
$\sfp (\Phi_3(G;x))=x+2$ (over $\mathbb{F}_3$) and $\sfp (\Phi_5(G;x))=x^2+x+1$ (over $\mathbb{F}_5$). Thus, \eqref{keyequ} holds for $p=5$ but not for $p=3$.  This means that for this graph,  Proposition \ref{almostmain} is applicable only for $p=5$. Therefore, we can not eliminate the possible that there exists some $Q\in \mathcal{Q}(G)$ with level $3$. Indeed, such a $Q$ does exist for this particular example. Let
$$Q=\scriptsize{\frac{1}{3}\left(
\begin{array}{ccccccccc}
1 & -1 & 0 & 2 & 1 & 0 & -1 & 1 & 0 \\
-1 & 1 & 0 & 1 & 2 & 0 & 1 & -1 & 0 \\
1 & -1 & 0 & -1 & 1 & 0 & 2 & 1 & 0 \\
0 & 0 & 0 & 0 & 0 & 0 & 0 & 0 & 3 \\
1 & 2 & 0 & -1 & 1 & 0 & -1 & 1 & 0 \\
-1 & 1 & 0 & 1 & -1 & 0 & 1 & 2 & 0 \\
0 & 0 & 3 & 0 & 0 & 0 & 0 & 0 & 0 \\
2 & 1 & 0 & 1 & -1 & 0 & 1 & -1 & 0 \\
0 & 0 & 0 & 0 & 0 & 3 & 0 & 0 & 0 \\
\end{array}
\right).}$$
Then $Q^\T AQ$ is an adjacency matrix of a graph. This indicates that $G$ is not DGS by Lemma \ref{onelevel}.
\begin{remark}\label{dispmain}\normalfont
	Let $H$ be the graph with adjacency matrix $Q^\T AQ$, where $A$ and $Q$ are matrices as described in Example \ref{ex2}.
We claim that  $m_p(G;x)\neq m_p(H;x)$ for $p=3$. Otherwise, noting that $\deg m_3(G;x)=2$ and  using the same procedure as in the proof of Proposition \ref{almostmain}, we would get that $3\nmid\ell(Q)$, which is a contradiction. Actually, $m_3(G;x)=x^7+2 x^6+2 x^5+x^4+2 x^3+2 x^2+x$ and $m_3(H;x)=x^8+x^7+2 x^5+x^4+2 x^2+2 x$.
\end{remark}
\begin{remark}\normalfont
	Let $G$ and $H$ be a pair of generalized cospectral graphs whose walk matrices have the same Smith normal form as follows:
	$$\diag[\underbrace{1,\ldots,1}_{\lceil\frac{n}{2}\rceil},\underbrace{2,\ldots,2,2b_1,2b_2}_{\lfloor\frac{n}{2}\rfloor}],$$
	where $b_2$ (and hence $b_1$) is odd and square-free. We claim that $G$ and $H$ must be isomorphic. Let $Q$ be the regular rational orthogonal matrix such that $Q^\T A(G)Q=A(H)$. We need to eliminate the possibility that $p\mid \ell(G)$ for any odd prime factor $p$ of $b_1$. Note that for such a prime $p$, Corollary \ref{samepmain2} clearly implies that $m_p(G)=m_p(H)$. Consequently, using the same argument as in the proof of Proposition \ref{almostmain}, we can show that $\ell(Q)\mid p^{-2}\det W(G)$. This means $p\nmid \ell(Q)$, as desired.
\end{remark}
We end the discussion of Example \ref{ex2} by suggesting the following natural and interesting problem.
\begin{problem}
Let $G$ and $H$ be a pair of generalized cospectral graphs whose walk matrices have the same Smith normal form as follows:
$$\diag[\underbrace{1,\ldots,1}_{\lceil\frac{n}{2}\rceil},\underbrace{2,\ldots,2,2b_1,2b_2,\ldots,2b_k}_{\lfloor\frac{n}{2}\rfloor}],$$
where $b_k$ (and hence each $b_i$) is odd and square-free. Suppose that $k\ge 3$. Can we still guarantee that $G$ and $H$ are isomorphic?	
\end{problem}

\end{example}
To see the extent to which  Theorem \ref{main} improves upon Theorem \ref{sqf}, we have performed a series of numerical experiments.  The graphs are randomly generated using the random graph model $G(n;p)$ model with $p=1/2$. For each $n\in\{10,15,\ldots,50\}$  we generated 1,000 graphs randomly, and
counted the number of graphs satisfying the condition of Theorem \ref{sqf} and Theorem \ref{main}, respectively. To see how often that \eqref{keyequ} is met under the assumption that $d_n$ is square-free, we also record the number of graphs satisfying this assumption. Table 1 records one of such experiments. For example, for $n=10$, among 1,000 graphs generated in one experiment, $261$ graphs have a square-free invariant factor $d_n$. For these $261$ graphs, $226$ graphs satisfy the condition of Theorem \ref{sqf} while $253$ graphs satisfy the condition of Theorem \ref{main}. The remaining $8$  graphs  do not satisfy \eqref{keyequ} and hence we do not know whether they are DGS or not.
\begin{table}[htbp]
	\footnotesize
	\centering
	\caption{\label{computer} Comparison between Theorem \ref{sqf} and Theorem \ref{main}}
	\begin{threeparttable}
	\begin{tabular}{ccccc}
		\toprule
		$n$ &\# graphs & \#DGS & \#DGS  &\#Unknown\\
			(graph order) & (with $d_n$ square-free\tnote{*} ) &(by Theorem \ref{sqf})  & (by Theorem \ref{main}) &(by Theorem \ref{main})\\
		\midrule
		10  &  261 &226&253&8\\
		15  &  283 &217&265&18\\
		20  & 268 &228&262&6\\
		25  & 254 &221&245&9\\
		30  &  257 &213&243&14\\
		35  &  252 &204&245&7\\
		40  &  280 &238&270&10\\
		45  &  250 &204&237&13\\
		50  &  275 &224&259&16\\
			\bottomrule
	\end{tabular}
\begin{tablenotes}
	\footnotesize
	\item[*]The numbers $d_n$ are usually  huge integers and hence complete  factorizations are unavailable in a reasonable time.  We use the fast command FactorInteger[$d_n$,\textbf{Automatic}] in Mathematica to factor $d_n$. Note that this command extracts only factors that are easy to find.
\end{tablenotes}
\end{threeparttable}
\end{table}

At the end of this paper, we would like to suggest a possible improvement on Theorem \ref{main}. We begin with a definition.
\begin{definition}\normalfont
Let $f\in \mathbb{F}_p[x]$ be a monic polynomial with irreducible factorization $f =\prod_{1\le i\le r}f_i^{e_i}$.
	  We define the \emph{square-root} of $f$, denoted by $\sqrt (f)$,  to be $\prod_{1\le i\le r}f_i^{\lceil\frac{e_i}{2}\rceil}$.
\end{definition}
We remind the reader that $(\sqrt (f))^2\neq f$ unless all $e_i$'s  are even. Note that $\sqrt (f)$ is always a multiple of  $\sfp (f)$, and they are equal precisely when all $e_i$ are either one or two. Thus, for any graph $G$ and prime $p$, we always have
\begin{equation}
\deg \sfp (\Phi_p(G;x))\le \deg \sqrt(\Phi_p(G;x)).
\end{equation}
While Lemma \ref{sameroots} tells us $\sfp(\Phi_p(G;x))$ divides $\chi(\restr{A}{\mathcal{N}(W^\T)};x)$, it seems that the corresponding result also holds if we replace  $\sfp(\Phi_p(G;x))$ by  $\sqrt (\Phi_p(G;x))$. If we can show this improvement,  then we can strengthen Inequality \eqref{basicupperbound} as
\begin{equation}
\deg \sqrt(\Phi_p(G;x))\le \nullity_p W(G),
\end{equation}
and moreover we can improve upon Theorem \ref{main} simply by replacing $\sfp(\Phi_p(G;x))$ with $\sqrt(\Phi_p(G;x))$. We write such a possible improvement on Theorem \ref{main} as the following conjecture.
\begin{conjecture}
\label{mainconj}
		Let $G\in \mathcal{G}_n$ and $d_n$ be the last invariant factor of $W=W(G)$. Suppose that $d_n$ is square-free. If for each odd prime factor $p$ of $d_n$,
		\begin{equation}\label{keyequconj}
		\deg\sqrt(\Phi_p(G;x))=\nullity_p W,
		\end{equation} then $G$ is DGS.
\end{conjecture}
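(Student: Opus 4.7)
The plan is to mirror the proof of Theorem \ref{main} with $\sqrt{}(\Phi_p(G;x))$ everywhere in place of $\sfp(\Phi_p(G;x))$. The single load-bearing step is the following strengthening of the second assertion of Lemma \ref{sameroots}, which I will call the \textbf{Key Lemma}: \emph{in $\overline{\mathbb{F}}_p[x]$, $\sqrt{}(\Phi_p(G;x))$ divides $\chi(\restr{A}{\mathcal{N}(W^\T)};x)$.} Granting this, the improvement of \eqref{basicupperbound} reading $\deg\sqrt{}(\Phi_p(G;x))\le\nullity_p W$ is immediate; under the hypothesis of equality one gets $\chi(\restr{A}{\mathcal{N}(W^\T)};x)=\sqrt{}(\Phi_p(G;x))$ and hence, by Lemma \ref{mainchi}, the formula $m_p(G;x)=\chi(A;x)/\sqrt{}(\Phi_p(G;x))$; since $\Phi_p(G;x)$, and therefore also $\sqrt{}(\Phi_p(G;x))$, is invariant under generalized cospectrality (Remark \ref{invgc}), the analog of Corollary \ref{samepmain} follows; and then the arguments of Proposition \ref{almostmain} and of Theorem \ref{main} (using Corollary \ref{oddlevel}) apply essentially verbatim to conclude $\ell(Q)=1$ for every $Q\in\mathcal{Q}(G)$, so $G$ is DGS by Lemma \ref{onelevel}.

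To attack the Key Lemma, I would first extract an explicit form for $\Phi_p(G;x)$. Writing $\phi(z):=\chi(\restr{A}{\mathcal{N}(W^\T)};z)$ and $K:=\mathcal{N}(W^\T)^\perp$, which is both $A$- and $(A+J)$-invariant (by Lemma \ref{invs} together with Lemma \ref{facchi} applied to $A+J$) and equals the cyclic subspace generated by $e$ under either operator (Lemma \ref{invt}), applying Lemma \ref{facchi} to both $A$ and $A+J$ gives $\chi(A;z)=\phi(z)m_p(G;z)$ and $\chi(A+J;z)=\phi(z)m'_p(z)$, where $m'_p(z):=\chi(\restr{(A+J)}{K};z)$. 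Combining the rank-one perturbation identity $\chi(A+J;z)=\chi(A;z)\bigl(1-e^\T(zI-A)^{-1}e\bigr)$ with the resolvent computation on the cyclic subspace $K$ yields the clean relation $m'_p(z)=m_p(G;z)-P(z)$, where $P(z):=m_p(G;z)\cdot e^\T(zI-A)^{-1}e\in\mathbb{F}_p[z]$ has degree strictly less than $\deg m_p(G;z)$. Consequently
$$\Phi_p(G;z)=\phi(z)\cdot\gcd\bigl(m_p(G;z),\,P(z)\bigr),$$
and the Key Lemma is equivalent to the divisibility $\gcd(m_p(G;z),\,P(z))\mid\phi(z)$, or equivalently $\Phi_p(G;z)\mid\phi(z)^2$. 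Over $\overline{\mathbb{F}}_p$ this unpacks pointwise: for each common eigenvalue $\lambda$, letting $m_{1,\lambda}$ and $m_{2,\lambda}$ denote the sizes of the unique Jordan blocks at $\lambda$ of $\restr{A}{K}$ and $\restr{(A+J)}{K}$ (both cyclic on $K$ with cyclic vector $e$), and $u_\lambda:=\dim(V_\lambda^A\cap\mathcal{N}(W^\T))$, what needs to be shown is
$$\min(m_{1,\lambda},\,m_{2,\lambda})\le u_\lambda.$$

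The attacking tool is a polarization of the argument of Lemma \ref{sameroots} to the level of Jordan chains. For chains $\xi_0,\xi_1=(A-\lambda)\xi_0,\ldots,\xi_{s-1}$ of $A$ and $\eta_0,\eta_1=((A+J)-\lambda)\eta_0,\ldots,\eta_{t-1}$ of $A+J$ at $\lambda$, setting $\alpha_i:=e^\T\xi_i$ and $\beta_j:=e^\T\eta_j$, the symmetry of $A$ together with $\lambda I-A=(\lambda I-(A+J))+J$ produces the bilinear recurrence $\alpha_i\beta_j=\xi_i^\T\eta_{j+1}-\xi_{i+1}^\T\eta_j$, whose diagonal sums telescope to $\sum_{i+j=C}\alpha_i\beta_j=0$ for every $C\ge\max(s,t)-1$. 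Since $\xi_i\in\mathcal{N}(W^\T)$ precisely when $\alpha_i=\alpha_{i+1}=\cdots=\alpha_{s-1}=0$ (and analogously for $\eta$), these identities force sufficiently long tails of one or the other chain into $\mathcal{N}(W^\T)$.

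\textbf{The main obstacle} is that the bilinear identities alone seem to yield only the weaker bound $u_\lambda\ge\lceil\min(m_{1,\lambda},m_{2,\lambda})/2\rceil$, which is precisely half of what is needed. Closing this factor-of-two gap will require additional structural input. The most promising avenues, in my view, are: first, exploiting that $\restr{(A+J)}{K}-\restr{A}{K}=\restr{J}{K}$ has rank at most one with image in $\Span\{e\}$, so $\restr{(A+J)}{K}$ is a very constrained rank-one perturbation of $\restr{A}{K}$ and both share the cyclic vector $e$; second, using the observation that whenever both $m_{1,\lambda}\ge 1$ and $m_{2,\lambda}\ge 1$ the eigenvector at the bottom of each $K$-Jordan block at $\lambda$ is forced to lie in the radical $K\cap\mathcal{N}(W^\T)$ of the bilinear form restricted to $K$; and third, combining these with the refined kernel-dimension data $\dim\ker(A-\lambda I)^j$ for $j\ge 1$. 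I expect a careful Jordan-type analysis along these lines to be the decisive step; once the Key Lemma is secured the remainder of the proof is a mechanical adaptation of Theorem \ref{main}.
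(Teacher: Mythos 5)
First, a point of order: the statement you are trying to prove is presented in the paper as Conjecture~\ref{mainconj}, not as a theorem --- the authors give no proof, only the remark (end of Section~\ref{dissec}) that everything would follow if one could show that $\sqrt(\Phi_p(G;x))$ divides $\chi(\restr{A}{\mathcal{N}(W^\T)};x)$, after which Lemma~\ref{basicinq}, Corollary~\ref{samepmain}, Proposition~\ref{almostmain} and the proof of Theorem~\ref{main} go through with $\sfp$ replaced by $\sqrt{}$. Your ``Key Lemma'' is exactly this missing divisibility, and the scaffolding around it is sound: $\mathcal{N}(W^\T)^\perp$ is indeed the column space of $W$, i.e.\ the cyclic subspace generated by $e$ under $A$ (and under $A+J$, by Lemma~\ref{invt}); the factorization $\chi(A;z)=\phi(z)m_p(G;z)$, the rank-one update formula, and the resulting identity $\Phi_p(G;z)=\phi(z)\gcd(m_p(G;z),P(z))$ are correct; and since $\restr{J}{\mathcal{N}(W^\T)}=0$ makes $\phi$ a common factor of $\chi(A;z)$ and $\chi(A+J;z)$, the multiplicity of $\lambda$ in $\Phi_p$ is $u_\lambda+\min(m_{1,\lambda},m_{2,\lambda})$, so your pointwise reformulation $\min(m_{1,\lambda},m_{2,\lambda})\le u_\lambda$ is the right target.

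The genuine gap is the one you name yourself: the Key Lemma is not proved. The telescoping bilinear identities $\sum_{i+j=C}\alpha_i\beta_j=0$ are, at bottom, a polarized version of the computation $(e^\T\xi)(e^\T\eta)=0$ in the proof of Lemma~\ref{sameroots}, and by your own accounting they deliver only $u_\lambda\ge\lceil\min(m_{1,\lambda},m_{2,\lambda})/2\rceil$ --- which is essentially the $\sfp$ bound already established in the paper, not the $\sqrt{}$ bound the conjecture requires. The three ``avenues'' you list for closing the factor-of-two gap are stated as hopes, not arguments; in particular the claim that the bottom eigenvector of each Jordan block of $\restr{A}{K}$ at a common eigenvalue lies in $K\cap\mathcal{N}(W^\T)$ is asserted without proof and is precisely the kind of statement whose failure would sink the program. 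Note also that the only cases currently known are $\min(m_{1,\lambda},m_{2,\lambda})=1$ (Lemma~\ref{sameroots}) and the partial information of Corollary~\ref{samepmain2} when $\nullity_p W=2$. What you have is a correct and useful reduction plus a research plan; it is not a proof, and the statement remains a conjecture.
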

\section*{Acknowledgments}
This work is supported by the	National Natural Science Foundation of China (Grant Nos. 12001006, 11971376 and 11971406) and the Scientific Research Foundation of Anhui Polytechnic University (Grant No.\,2019YQQ024).

\end{document}